\newtheorem{theorem}{Theorem}[section]
\newtheorem{lemma}[theorem]{Lemma}
\newtheorem{corollary}[theorem]{Corollary}
\newtheorem{proposition}[theorem]{Proposition}
\newtheorem{problem}[theorem]{Problem}
\def\det{{\rm det}}
\title{{\Large \bf On the spectral radius, energy and Estrada index of the Sombor matrix of graphs \thanks{Supported by the National Natural Science Foundation of China (No. 12071411, 11771443).}~}}
\author{Zhen Lin\thanks{Corresponding author. E-mail addresses: lnlinzhen@163.com (Z. Lin), miaolianying@cumt.edu.cn (L. Miao).}, Lianying Miao\\
{\footnotesize School of Mathematics, China University of Mining and Technology,}\\ {\footnotesize  Xuzhou, 221116, Jiangsu, P.R.
China}\\
}
\date{}
\begin{document}
\openup 1.0\jot
\date{}\maketitle
\begin{abstract}

Let $G$ be a simple undirected graph with vertex set $V(G)=\{v_1, v_2, \ldots, v_n\}$ and edge set $E(G)$.
The Sombor matrix $\mathcal{S}(G)$ of a graph $G$ is defined so that its $(i,j)$-entry is equal to $\sqrt{d_i^2+d_j^2}$ if
the vertices $v_i$ and $v_j$ are adjacent, and zero otherwise, where $d_i$ denotes the degree of vertex $v_i$ in $G$. In this paper, lower and upper bounds on the spectral radius, energy and Estrada index of the Sombor matrix of graphs are obtained, and the respective extremal graphs are characterized.

\bigskip
\noindent {\bf Mathematics Subject Classification 2010:} 05C50, 05C35, 05C90

\noindent {\bf Keywords:} Sombor matrix; Sombor spectral radius; Sombor energy;\\ Sombor Estrada index

\end{abstract}
\baselineskip 20pt

\section{\large Introduction}
\ \ \ \  Let $G$ be a simple undirected graph with vertex set $V(G)$ and edge set $E(G)$. For $v\in V(G)$, $N_G(v)$ denotes the neighborhood of $v$ in $G$, and $d_v=|N_G(v)|$ denotes the degree of vertex $v$ in $G$. The minimum and maximum degree of a vertex in $G$ are denoted by $\delta$ and $\Delta$, respectively.

Given a graph $G$, the adjacency matrix $A$ is the $n\times n$ matrix whose $(i, j)$-entry is $1$ if $v_iv_j\in E(G)$ and zero otherwise. Thus $A$ is a real symmetric matrix, its eigenvalues must be real and arranged in non-increasing order $\lambda_1\geq \lambda_2\geq \cdots \geq \lambda_n$, where $\lambda_1$ is called the spectral radius of $G$. For the adjacency spectra, one may refer to \cite{N, S, TT} and the references therein.

The energy of $G$, introduced by Gutman \cite{G}, is defined as $E_{A}(G)=\sum\limits_{i=1}^{n}|\lambda_i|$, which is intensively studied in chemistry, since it can be used to approximate the total $\pi$-electron energy of a molecule, see for example \cite{G1, G2}. There is a wealth of literature relating to the energy see, for example, \cite{G3} for surveys, and see \cite{LSG}, for monograph.

In 2021, a new vertex-degree-based molecular structure descriptor was put forward by Gutman \cite{G4}, the Sombor index of a graph $G$, defined as $SO(G)=\sum_{uv\in E(G)}\sqrt{d_u^2+d_v^2}$. The study of the Sombor index of graphs has quickly received much attention. Cruz et al. \cite{CGR} studied the Sombor index of chemical graphs, and characterized the graphs extremal with respect to the Sombor index over the following sets: (connected) chemical graphs, chemical trees, and hexagonal systems. Deng et al. \cite{DTW} obtained a sharp upper bound for the Sombor index among all molecular trees with fixed numbers of vertices, and characterized those molecular trees achieving the extremal value. Das et al. \cite{DCC} gave lower and upper bounds on the Sombor index of graphs by using some graph parameters. Moreover, they obtained several relations on Sombor index with the first and second Zagreb indices of graphs. R\'{e}ti et al. \cite{RDA} characterized graphs with the maximum Sombor index in the classes of all connected unicyclic, bicyclic, tricyclic, tetracyclic, and pentacyclic graphs of a fixed order.  Red\v{z}epovi\'{c} \cite{R} showed that the Sombor index has good predictive potential. For other related results, one may refer to \cite{G5, K, KG, WMLF} and the references therein.

The aim of this paper is to study the Sombor index from an algebraic viewpoint, which is a natural idea in mathematical chemistry. The Sombor matrix $\mathcal{S}(G)$ of a graph $G$ is defined so that its $(i,j)$-entry is equal to $\sqrt{d_i^2+d_j^2}$ if the vertices $v_i$ and $v_j$ are adjacent, and zero otherwise, where $d_i$ denotes the degree of vertex $v_i$ in $G$. The eigenvalues of $\mathcal{S}(G)$ are denoted by $\rho_1(G)\geq \rho_2(G)\geq \cdots \geq \rho_n(G)$, where $\rho_1(G)$ is called the Sombor spectral radius of $G$. The Sombor energy and Estrada index of the Sombor matrix of graphs are defined as $\mathcal{E}(G)=\sum_{i=1}^{n}|\rho_i(G)|$ and $EE(G)=\sum_{i=1}^{n}e^{\rho_i(G)}$, respectively. We obtain lower and upper bounds on the  Sombor spectral radius,  Sombor energy and Sombor Estrada index of graphs, and characterize the respective extremal graphs.

\section{\large  Preliminaries}

The diameter of a graph $G$, denoted by $diam(G)$, is the maximum distance between any pair of vertices of $G$. Let $K_{s, \, t}$ and $K_n$ denote the complete bipartite graph with $s+t$ vertices and the complete graph with $n$ vertices, respectively. The first Zagreb index \cite{GT} $Z_1$ and forgotten topological index \cite{FG} $F$ of $G$ are defined as
$$Z_1=Z_1(G)=\sum\limits_{i=1}^{n}d_i^2=\sum\limits_{v_iv_j\in E(G)}(d_i+d_j), \quad F=F(G)=\sum\limits_{i=1}^{n}d_i^3=\sum\limits_{v_iv_j\in E(G)}(d_i^2+d_j^2).$$

\begin{lemma}{\bf (\cite{CS, Z})}\label{le2,1} 
Let $G$ be a graph with $n$ vertices. Then
$$\sqrt{\frac{Z_1}{n}}\leq \lambda_1 \leq \Delta.$$
The equality in the left hand side holds if and only if $G$ is regular or semiregular. If $G$ is a connected graph, then the equality in the right hand side holds if and only if $G$ is regular.
\end{lemma}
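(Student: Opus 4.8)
The plan is to treat the two inequalities separately by classical variational and eigenvector arguments, and then to extract the equality cases by tracking where each estimate is tight.

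For the lower bound I would test the quadratic form of $A^{2}$ against the all-ones vector $\mathbf{1}=(1,\dots,1)^{T}$. Since $(A\mathbf{1})_i=\sum_{j}A_{ij}=d_i$, one gets $\|A\mathbf{1}\|^{2}=\mathbf{1}^{T}A^{2}\mathbf{1}=\sum_{i=1}^{n}d_i^{2}=Z_1$. Writing $\mathbf{1}=\sum_i c_i u_i$ in an orthonormal eigenbasis $u_1,\dots,u_n$ of the real symmetric matrix $A$, we have $\mathbf{1}^{T}A^{2}\mathbf{1}=\sum_i c_i^{2}\lambda_i^{2}$. Because $A$ is a nonnegative matrix, Perron--Frobenius gives $\lambda_1=\rho(A)=\max_i|\lambda_i|$, so $\lambda_i^{2}\le\lambda_1^{2}$ and $Z_1\le\lambda_1^{2}\sum_i c_i^{2}=n\lambda_1^{2}$, which is the desired inequality. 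Equality forces $c_i=0$ whenever $|\lambda_i|<\lambda_1$, i.e.\ $\mathbf{1}$ lies in the span of the eigenvectors with $|\lambda_i|=\lambda_1$. If $\mathbf{1}$ is itself a $\lambda_1$-eigenvector then $A\mathbf{1}=\lambda_1\mathbf{1}$ says $d_i\equiv\lambda_1$, so $G$ is regular; the only other contribution to $|\lambda_i|=\lambda_1$ comes from a $-\lambda_1$ eigenvector, which occurs for bipartite graphs and, after spelling out the two colour classes, forces the degree to be constant on each class, i.e.\ $G$ is (bipartite) semiregular.

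For the upper bound I would use a Perron eigenvector $x>0$ associated with $\lambda_1$ and pick an index $k$ with $x_k=\max_i x_i$. The $k$-th coordinate of $Ax=\lambda_1 x$ reads $\lambda_1 x_k=\sum_{j\sim k}x_j\le d_k x_k\le\Delta x_k$, and dividing by $x_k>0$ yields $\lambda_1\le\Delta$. When $G$ is connected, equality forces $x_j=x_k$ for every neighbour $j$ of $k$ and $d_k=\Delta$; iterating this along paths and invoking connectedness shows every vertex attains the maximal coordinate and has degree $\Delta$, so $G$ is $\Delta$-regular. The converse is immediate, since $\mathbf{1}$ is the Perron vector of a regular graph.

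The routine part is the two inequalities; the delicate part is the equality analysis. The main obstacle is the lower-bound equality case: one must justify that $\lambda_1=\max_i|\lambda_i|$ via Perron--Frobenius, correctly identify when $\mathbf{1}$ can spread over both the $+\lambda_1$ and $-\lambda_1$ eigenspaces, and verify that this split yields exactly the semiregular bipartite structure (and, for disconnected $G$, that all components must then share the same spectral radius). The upper-bound equality is easier but still relies on the connectedness hypothesis to propagate the maximal coordinate throughout the whole graph.
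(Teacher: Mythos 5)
The paper does not actually prove this lemma, so there is no internal argument to compare against: Lemma~\ref{le2,1} is quoted as a known result from \cite{CS, Z} and used as a black box. Judged on its own terms, your proof is the classical one and is essentially correct. The lower bound via $\mathbf{1}^{T}A^{2}\mathbf{1}=Z_1\leq n\lambda_1^{2}$ (using Perron--Frobenius to get $\lambda_1=\max_i|\lambda_i|$) and the upper bound via the maximal coordinate of a nonnegative $\lambda_1$-eigenvector are exactly the standard routes, and your equality analysis has the right ingredients. Two points to tighten in a full write-up. First, in the lower-bound equality case the correct dichotomy for connected $G$ is not ``$\mathbf{1}$ is a $\lambda_1$-eigenvector or a $-\lambda_1$-eigenvector'' but rather: either $G$ is non-bipartite, in which case $-\lambda_1$ is not an eigenvalue and $\mathbf{1}$ must be a multiple of the Perron vector (so $G$ is regular), or $G$ is bipartite and $\mathbf{1}=\alpha x+\beta x'$ is a genuinely \emph{mixed} combination of the $\pm\lambda_1$ eigenvectors; it is precisely this mixed case that forces the Perron vector, and hence the degrees, to be constant on each colour class, yielding the semiregular graphs. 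Second, as you yourself flag, for disconnected $G$ equality additionally forces every component to have spectral radius $\lambda_1$ (each component must be regular of degree $\lambda_1$ or semiregular with degree product $\lambda_1^{2}$); the lemma as quoted glosses over this, since the cited sources state the equality case for connected graphs. Relatedly, for disconnected $G$ one can only guarantee a nonnegative (not strictly positive) $\lambda_1$-eigenvector, but your maximal-coordinate argument for $\lambda_1\leq\Delta$ goes through unchanged with $x\geq 0$, $x\neq 0$.
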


\begin{lemma}{\bf (\cite{CS, H1})}\label{le2,2} 
Let $G$ be a connected graph of order $n$ with $m$ edges. Then
$$\frac{2m}{n}\leq \lambda_1 \leq \sqrt{2m-n+1}.$$
The equality in the left hand side holds if and only if $G$ is a regular graph, and the equality in the right hand side holds if and only if $G\cong K_{1,\,n-1}$ or $G\cong K_n$.
\end{lemma}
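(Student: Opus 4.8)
The plan is to prove the two inequalities by separate arguments, since the lower bound is an elementary Rayleigh-quotient estimate while the upper bound is the genuinely harder Hong-type inequality whose sharpness dictates the combinatorial form of the extremal graphs.

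For the lower bound I would apply the variational characterization $\lambda_1=\max_{y\ne 0}\frac{y^{T}Ay}{y^{T}y}$ to the all-ones vector $\mathbf{1}$. Because $\mathbf{1}^{T}A\mathbf{1}=\sum_{i,j}A_{ij}=2m$ and $\mathbf{1}^{T}\mathbf{1}=n$, this yields $\lambda_1\ge \frac{2m}{n}$ at once. For the equality case I would invoke the Perron--Frobenius theorem: since $G$ is connected, $A$ is irreducible, so $\lambda_1$ is simple and admits a strictly positive eigenvector. Equality in the Rayleigh quotient holds exactly when the chosen test vector is itself an eigenvector for $\lambda_1$, i.e. $A\mathbf{1}=\lambda_1\mathbf{1}$; reading off the $i$-th coordinate gives $d_i=\lambda_1$ for every $i$, which is to say $G$ is regular, and conversely every $d$-regular graph attains $\lambda_1=d=\frac{2m}{n}$.

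For the upper bound I would begin from the two trace identities $\sum_{i=1}^{n}\lambda_i={\rm tr}(A)=0$ and $\sum_{i=1}^{n}\lambda_i^{2}={\rm tr}(A^{2})=2m$, the second because the diagonal of $A^{2}$ records the vertex degrees. Writing $\lambda_1^{2}=2m-\sum_{i=2}^{n}\lambda_i^{2}$, the desired bound $\lambda_1\le\sqrt{2m-n+1}$ is equivalent to the sharp spectral inequality $\sum_{i=2}^{n}\lambda_i^{2}\ge n-1$. The plan is to establish this using the positive Perron eigenvector together with connectedness (which forces minimum degree at least one); a relevant structural fact is that a graph with an edge contains an induced $K_2$, so Cauchy interlacing gives $\lambda_n\le -1$, matching the $-1$ seen in the extremal spectra.

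The main obstacle is precisely this inequality and its equality analysis. The naive variational estimates are too weak: Rayleigh or Cauchy--Schwarz applied to $A$, or an argument localized at the vertex of maximum eigenvector entry, only produce $\lambda_1\le\Delta$ or $\lambda_1^{2}\le\frac{2m(n-1)}{n}$, and the latter is strictly larger than $2m-n+1$ whenever $G$ is not complete; hence a global argument is needed. I expect the most delicate point to be the characterization of equality, because two structurally very different graphs attain the bound: for $K_n$ the non-principal spectrum is $-1$ with multiplicity $n-1$, giving $\sum_{i\ge 2}\lambda_i^{2}=n-1$ and $\lambda_1=n-1=\sqrt{2m-n+1}$, whereas for $K_{1,n-1}$ the non-principal spectrum is $0$ with multiplicity $n-2$ together with $-\sqrt{n-1}$, again giving $\sum_{i\ge 2}\lambda_i^{2}=n-1$ and $\lambda_1=\sqrt{n-1}=\sqrt{2m-n+1}$. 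The argument must therefore be supple enough to isolate exactly these two families, which is where I expect the bulk of the work to lie.
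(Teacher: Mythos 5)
Your proof of the lower bound is complete and correct: the Rayleigh quotient at the all-ones vector gives $\lambda_1\ge 2m/n$, and equality forces $A\mathbf{1}=\lambda_1\mathbf{1}$, i.e.\ regularity. (Note that the paper itself offers no proof of this lemma --- it is quoted from Collatz--Sinogowitz and Hong --- so your argument is measured against the classical proofs rather than against anything in the text.)

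The upper bound, however, is not actually proven. You correctly observe that, via $\sum_{i}\lambda_i^2=2m$, the bound $\lambda_1\le\sqrt{2m-n+1}$ is equivalent to $\sum_{i=2}^{n}\lambda_i^2\ge n-1$, and you verify that $K_n$ and $K_{1,n-1}$ attain equality; but you never establish this inequality, and the tools you propose cannot do it. Interlacing against an induced $K_2$ gives only $\lambda_n\le -1$, hence $\sum_{i\ge 2}\lambda_i^2\ge 1$, which falls short of the target $n-1$ by a factor of $n-1$; and no appeal to the positivity of the Perron vector closes this gap, since (as your own examples show) the required mass sits in $n-1$ eigenvalues each of square $1$ for $K_n$ but in the single eigenvalue $-\sqrt{n-1}$ for the star, so any argument distributing the contribution eigenvalue-by-eigenvalue is doomed. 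Hong's actual proof avoids the spectral reformulation entirely: for a nonnegative symmetric matrix the spectral radius is at most the largest row sum; applied to $A^2$, whose $i$-th row sum is $\sum_{j\sim i}d_j$, this gives $\lambda_1^2\le\max_i\sum_{j\sim i}d_j$, and since connectedness forces every vertex $j\ne i$ with $j\not\sim i$ to have $d_j\ge 1$, one gets $\sum_{j\sim i}d_j=2m-d_i-\sum_{j\ne i,\,j\not\sim i}d_j\le 2m-d_i-(n-1-d_i)=2m-n+1$. The equality characterization --- which your proposal also leaves entirely open in the ``only if'' direction --- then follows by tracing when both of these estimates are simultaneously tight. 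As written, your proposal proves only the left half of the lemma.
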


\begin{lemma}{\bf (\cite{H})}\label{le2,3} 
Let $A$ be a symmetric matrix of order $n$ with eigenvalues $\xi_1\geq \xi_2 \geq \cdots  \geq \xi_n$, and let $B$ be
its principal submatrix with eigenvalues $\eta_1\geq \eta_2 \geq \cdots  \geq \eta_k$ and $n>k$. Then $\xi_i \geq \eta_i \geq \xi_{n-k+i}$
for $i=1, 2, \ldots, k$.
\end{lemma}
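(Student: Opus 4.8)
The plan is to derive this classical interlacing result from the Courant--Fischer variational characterization of eigenvalues of a symmetric matrix. Since a principal submatrix of $A$ is obtained by selecting a subset of $k$ indices, after a simultaneous permutation of rows and columns (a permutation similarity, which leaves the spectrum of $A$ unchanged) I may assume that $B$ is the leading $k\times k$ block of $A$. Let $W=\{x\in\mathbb{R}^{n}:x_{k+1}=\cdots=x_{n}=0\}$, a $k$-dimensional subspace which I identify with $\mathbb{R}^{k}$ via $x=(y,0)\mapsto y$. The key observation is that for every such $x$ one has $x^{\top}Ax=y^{\top}By$ and $x^{\top}x=y^{\top}y$, so the Rayleigh quotient of $A$ restricted to $W$ agrees with that of $B$ on $\mathbb{R}^{k}$; all of the work reduces to comparing variational problems that see the same quadratic form.

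First I would establish the upper bound $\eta_{i}\le\xi_{i}$ using the max--min form
\[
\xi_{i}=\max_{\dim S=i}\ \min_{0\ne x\in S}\frac{x^{\top}Ax}{x^{\top}x},\qquad
\eta_{i}=\max_{\substack{\dim S'=i\\ S'\subseteq\mathbb{R}^{k}}}\ \min_{0\ne y\in S'}\frac{y^{\top}By}{y^{\top}y},
\]
where $S$ and $S'$ range over subspaces of the indicated dimension. Taking the optimal $i$-dimensional $S'$ for $B$ and embedding it into $W\subseteq\mathbb{R}^{n}$ yields an admissible $i$-dimensional competitor in the problem for $\xi_{i}$, and because the Rayleigh quotients coincide on $W$ this gives $\xi_{i}\ge\eta_{i}$.

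Next I would establish the lower bound $\eta_{i}\ge\xi_{n-k+i}$ through the dual min--max characterization
\[
\xi_{n-k+i}=\min_{\dim T=k-i+1}\ \max_{0\ne x\in T}\frac{x^{\top}Ax}{x^{\top}x},\qquad
\eta_{i}=\min_{\substack{\dim T'=k-i+1\\ T'\subseteq\mathbb{R}^{k}}}\ \max_{0\ne y\in T'}\frac{y^{\top}By}{y^{\top}y}.
\]
Embedding the optimal $(k-i+1)$-dimensional $T'$ for $B$ into $W$ produces an admissible competitor of the correct dimension for $\xi_{n-k+i}$, and the agreement of Rayleigh quotients on $W$ forces $\xi_{n-k+i}\le\eta_{i}$. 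Combining the two bounds gives $\xi_{i}\ge\eta_{i}\ge\xi_{n-k+i}$ for each $i=1,\dots,k$, as claimed.

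The main obstacle is purely one of bookkeeping in the dual characterization: I must check that the subspace dimension $n-(n-k+i)+1=k-i+1$ attached to $\xi_{n-k+i}$ matches exactly the dimension $k-i+1$ appearing in the min--max form for $\eta_{i}$, so that the embedded subspace is a legitimate competitor and no off-by-one error creeps in. There is no analytic difficulty once the variational forms are in place. As an alternative that sidesteps the index arithmetic, one can argue by induction on $n-k$ from the single-deletion case $k=n-1$, repeatedly invoking the classical interlacing inequalities $\xi_{i}\ge\eta_{i}\ge\xi_{i+1}$ and tracking how the indices shift as each row and column is removed.
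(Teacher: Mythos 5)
Your proof is correct. Note, however, that the paper does not prove this lemma at all: it is quoted as a known result (Cauchy interlacing) with a citation to Haemers' paper \cite{H}, so there is no in-paper argument to compare against. Your Courant--Fischer derivation is the standard textbook proof and is complete: the reduction to a leading principal block via permutation similarity is legitimate, the embedding $y\mapsto(y,0)$ preserves Rayleigh quotients, and the dimension bookkeeping in the dual characterization is exactly right, since $n-(n-k+i)+1=k-i+1$ matches the codimension-type count for $\eta_i$ in $\mathbb{R}^k$. Your alternative suggestion --- induction on $n-k$ from the single-deletion case $\xi_i\geq\eta_i\geq\xi_{i+1}$ --- also works and is closer in spirit to how interlacing is often invoked, since composing $n-k$ single deletions telescopes precisely to $\xi_i\geq\eta_i\geq\xi_{n-k+i}$. (For reference, Haemers' own proof in \cite{H} proves a more general statement for matrices of the form $S^{T}AS$ with $S^{T}S=I$, using a dimension-counting argument with spans of eigenvectors rather than the full min--max machinery, but that distinction is immaterial here.)
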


\begin{lemma}{\bf (\cite{K1})}\label{le2,4} 
Let $a_1\geq a_2\geq \cdots \geq a_n\geq 0$ be a sequence of non-negative real numbers. Then
$$\sum\limits_{i=1}^{n}a_i+n(n-1)\left(\prod_{i=1}^{n}a_i\right)^{1/n}\leq \left(\sum\limits_{i=1}^{n}\sqrt{a_i}\right)^2\leq (n-1)\sum\limits_{i=1}^{n}a_i+n\left(\prod_{i=1}^{n}a_i\right)^{1/n}.$$
\end{lemma}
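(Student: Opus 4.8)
The plan is to pass to the nonnegative variables $b_i=\sqrt{a_i}$ and exploit two elementary identities. Writing $S=\sum_{i=1}^n b_i$, the expansion $S^2=\sum_i b_i^2+2\sum_{i<j}b_ib_j$ together with the dual form $S^2=n\sum_i b_i^2-\sum_{i<j}(b_i-b_j)^2$ converts the two halves of the lemma into
\[
2\sum_{i<j}b_ib_j\ \geq\ n(n-1)\left(\prod_{i=1}^n b_i\right)^{2/n}
\qquad\text{and}\qquad
\sum_{i<j}(b_i-b_j)^2\ \geq\ \sum_{i=1}^n b_i^2-n\left(\prod_{i=1}^n b_i\right)^{2/n},
\]
respectively. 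Equivalently, the whole statement is the two-sided bound $\sum_i a_i-n(\prod_i a_i)^{1/n}\leq\sum_{i<j}(\sqrt{a_i}-\sqrt{a_j})^2\leq(n-1)\big(\sum_i a_i-n(\prod_i a_i)^{1/n}\big)$ controlling the pairwise square-root differences by the arithmetic–geometric-mean gap, and the two directions can be treated separately.

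For the left inequality I would simply apply AM–GM to the $n(n-1)$ off-diagonal products $\{b_ib_j:\ i\neq j\}$, whose sum is precisely $2\sum_{i<j}b_ib_j$. Each index $k$ occurs among these products with total exponent $2(n-1)$, so their product is $\left(\prod_k b_k\right)^{2(n-1)}$ and their geometric mean is $\left(\prod_k b_k\right)^{2/n}$; AM–GM then yields the first displayed inequality directly, and adding $\sum_i b_i^2$ to both sides recovers the left bound of the lemma. Equality forces all off-diagonal products to coincide, hence all positive $b_i$ (equivalently all nonzero $a_i$) to be equal, while the case in which some $a_i$ vanishes is trivial since the geometric-mean term is then $0$.

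The right inequality is the genuinely harder half, and is where I expect the real work to lie. Both sides are homogeneous of degree one in the $a_i$, so I would normalize $\prod_i a_i=1$; a vanishing coordinate is handled separately, where the bound collapses to the Cauchy–Schwarz estimate $\big(\sum_i\sqrt{a_i}\big)^2\leq(n-1)\sum_i a_i$ on the at most $n-1$ nonzero terms. Under the normalization one studies $F(b)=(n-1)\sum_i b_i^2+n-\big(\sum_i b_i\big)^2$ on $\{b_i>0,\ \prod_i b_i=1\}$ and aims to show $F\geq0$, with equality at the all-ones point. At any interior critical point the Lagrange conditions read $2(n-1)b_k^2-2\big(\sum_i b_i\big)b_k=\mu$ for every $k$, so each $b_k$ is a root of one fixed quadratic and takes at most two distinct values, say $u$ with multiplicity $p$ and $v$ with multiplicity $n-p$; the inequality then reduces to verifying $(n-1)\big(pu^2+(n-p)v^2\big)+n\geq\big(pu+(n-p)v\big)^2$ subject to $u^p v^{\,n-p}=1$, a one-parameter check. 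The main obstacle is precisely this last reduction together with the control of the limiting (degenerate) configurations on the non-compact constraint set. Since this direction is exactly Kober's classical inequality, a clean alternative is to invoke the stated source for it while retaining the self-contained AM–GM argument for the left inequality.
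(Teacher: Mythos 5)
The paper does not prove this lemma at all: it is quoted directly from Kober's paper \cite{K1}, and the citation is the entire argument. Measured against that, your proposal supplies strictly more mathematics than the paper does. The substitution $b_i=\sqrt{a_i}$ and your two displayed reformulations are correct, and your AM--GM proof of the left inequality is complete and correct: the $n(n-1)$ ordered off-diagonal products $b_ib_j$ sum to $2\sum_{i<j}b_ib_j$, each index occurs in their product with total exponent $2(n-1)$, so their geometric mean is $\left(\prod_k b_k\right)^{2/n}$ and AM--GM gives exactly $2\sum_{i<j}b_ib_j\geq n(n-1)\left(\prod_i a_i\right)^{1/n}$; adding $\sum_i a_i=\sum_i b_i^2$ recovers the left bound. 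That half is a genuine, self-contained improvement over a bare citation.

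The caveat is the right inequality, and the obstacles you flag there are real, not cosmetic. The function $F(b)=(n-1)\sum_i b_i^2+n-\left(\sum_i b_i\right)^2$ is not coercive on the non-compact constraint set $\{b_i>0,\ \prod_i b_i=1\}$: taking $b_1=\epsilon^{\,n-1}$ and $b_2=\cdots=b_n=\epsilon^{-1}$ gives $F=(n-2)\epsilon^{2(n-1)}-2(n-1)\epsilon^{\,n-2}+n\rightarrow n$ as $\epsilon\rightarrow 0$, so $F$ stays bounded along directions in which $n-1$ coordinates blow up. Consequently, the claim that the infimum is attained at an interior critical point needs a separate argument (ruling out degenerate limits), and the concluding two-value verification $(n-1)\bigl(pu^2+(n-p)v^2\bigr)+n\geq\bigl(pu+(n-p)v\bigr)^2$ under $u^pv^{\,n-p}=1$ is itself left unproved. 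As written, the variational route is a plan rather than a proof. However, since you explicitly propose to fall back on citing Kober for this direction --- which is precisely what the paper does for both directions --- the proposal as a whole is acceptable: cite \cite{K1} for the hard half, and keep your AM--GM argument for the easy half, which the paper never bothers to prove.
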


\begin{lemma}{\bf (\cite{GG})}\label{le2,5} 
Let $M$ be an $n\times n$ non-negative symmetric matrix such that its underlying graph is connected. Let $\lambda_1(M), \lambda_2(M), \ldots, \lambda_k(M)$ be all the eigenvalues of $M$ with absolute value equal to $\lambda_1(M)$. Then $k>1$ if and only if all closed walks in $G$ have length divisible by $k$.
\end{lemma}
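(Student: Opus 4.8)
The plan is to recognize this as the Perron--Frobenius (Frobenius imprimitivity) theorem specialized to the symmetric case, where it collapses to a bipartite dichotomy. Since the underlying graph $G$ is connected, $M$ is an irreducible non-negative matrix, so by the Perron--Frobenius theorem $\rho:=\lambda_1(M)$ is a simple eigenvalue admitting a strictly positive eigenvector $x$. Because $M$ is symmetric, all its eigenvalues are real, and hence any eigenvalue of modulus $\rho$ must equal $\rho$ or $-\rho$. As $\rho$ is simple, this already forces $k\le 2$, so the entire statement reduces to deciding when $k=2$, i.e.\ when $-\rho$ is also an eigenvalue.

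First I would show that $k=2$ is equivalent to $G$ being bipartite. For the forward direction, suppose $My=-\rho y$ with $y\neq 0$. Taking entrywise absolute values and using non-negativity of $M$ gives $M|y|\ge |My|=\rho|y|$; since $\rho$ is the spectral radius this must in fact be an equality, so $|y|$ is a non-negative eigenvector for $\rho$ and therefore, by simplicity of the Perron root, a positive multiple of $x$. In particular every entry of $y$ is nonzero, and the sets $V_{+}=\{i:y_i>0\}$ and $V_{-}=\{i:y_i<0\}$ partition $V(G)$. Feeding this back into $\sum_j M_{ij}y_j=-\rho y_i$ and comparing with $\sum_j M_{ij}|y_j|=\rho|y_i|$ shows, for each $i$, that $\sum_j M_{ij}\bigl(|y_j|\pm y_j\bigr)=0$ with all summands of one sign, forcing every neighbour of a $V_{+}$-vertex into $V_{-}$ and vice versa; thus $(V_{+},V_{-})$ is a bipartition. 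Conversely, if $G$ is bipartite with parts $V_{+},V_{-}$, let $D$ be the diagonal $\pm1$ matrix equal to $+1$ on $V_{+}$ and $-1$ on $V_{-}$; then every edge joins the two parts, so $DMD=-M$, whence $M$ and $-M$ are similar and $-\rho$ is an eigenvalue, giving $k=2$.

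Finally I would translate bipartiteness into the closed-walk condition. Since $(M^{\ell})_{ii}$ is a non-negative sum of weights of closed walks of length $\ell$ at $v_i$, and $M$ is non-negative, $G$ admits a closed walk of length $\ell$ precisely when $\mathrm{tr}(M^{\ell})=\sum_{i=1}^{n}\lambda_i(M)^{\ell}>0$; standard graph theory then gives that $G$ is bipartite if and only if it has no closed walk of odd length, i.e.\ every closed walk has length divisible by $2$. Chaining the equivalences, $k>1$ (equivalently $k=2$, equivalently $-\rho\in\mathrm{spec}(M)$) holds if and only if $G$ is bipartite, if and only if every closed walk has length divisible by $k=2$, which is the asserted biconditional.

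The main obstacle is the forward direction of the bipartite characterization: one must extract a genuine vertex $2$-colouring from the mere existence of the eigenvalue $-\rho$, and this is exactly where the simplicity of the Perron root and the absolute-value comparison $M|y|=\rho|y|$ do the essential work. The diagonal conjugation $DMD=-M$ in the reverse direction and the closed-walk bookkeeping via $\mathrm{tr}(M^{\ell})$ are routine by comparison.
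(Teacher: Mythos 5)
Your proof is correct, but there is nothing in the paper to compare it against: Lemma~\ref{le2,5} is quoted from Godsil's book \cite{GG} and used without proof. In that source the result is a corollary of the general Perron--Frobenius (Frobenius imprimitivity) theory: for an irreducible non-negative matrix, the eigenvalues of maximum modulus are $\rho$ times the $k$-th roots of unity, where $k$ is the gcd of the lengths of closed walks in the underlying digraph. Your argument replaces that machinery with a self-contained treatment of the symmetric case: realness of the spectrum plus simplicity of the Perron root caps $k$ at $2$; the sign pattern of a $(-\rho)$-eigenvector, extracted via the forced equality $M|y|=\rho|y|$, produces a bipartition; and the conjugation $DMD=-M$ gives the converse. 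This is more elementary and makes transparent that the dichotomy is really ``bipartite or not,'' which is exactly how the lemma is invoked in Theorem~\ref{th4,2}. Two details worth tightening: (i) the step ``$M|y|\geq\rho|y|$ must be an equality'' deserves its one-line justification --- pair against the positive Perron vector $x$, noting $x^{T}\bigl(M|y|-\rho|y|\bigr)=\rho x^{T}|y|-\rho x^{T}|y|=0$ while the vector in parentheses is non-negative --- or use the Rayleigh quotient; (ii) if $k$ counts eigenvalues with multiplicity, you should also record that $-\rho$ is simple, which your own argument yields (two independent $(-\rho)$-eigenvectors would combine to a nonzero eigenvector with a zero entry, contradicting the fact that every $(-\rho)$-eigenvector has all entries nonzero). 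Finally, you silently repair an awkwardness in the statement itself: read literally, ``all closed walks have length divisible by $k$'' is vacuous when $k=1$, so the ``if'' direction is degenerate; your chain of equivalences ($k>1$ iff $k=2$ iff $G$ bipartite iff every closed walk has even length) is the intended content and is the form actually needed in the paper.
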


\begin{lemma}{\bf (\cite{GP, LSG})}\label{le2,6} 
Let $T_n$ be a tree with $n\geq 3$ vertices. Then
$$2\sqrt{n-1}=E_A(K_{1,\,n-1})\leq E_A(T_n)\leq E_A(P_n)=\begin{dcases}
2\csc\frac{\pi}{2(n+1)}-2, & \text{if}\,\, n \,\,\text{is} \,\,\text{even},\\
2\cot\frac{\pi}{2(n+1)}-2, & \text{if}\,\, n\,\,\text{is} \,\,\text{odd}.\\
\end{dcases}$$
\end{lemma}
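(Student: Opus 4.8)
The plan is to pass from energy to the combinatorics of matchings via the Coulson integral formula, which is available because every tree is bipartite and its characteristic polynomial records matchings with alternating signs. First I would compute the two extremal values directly. For $K_{1,\,n-1}$, the adjacency matrix has rank $2$ and a spectrum symmetric about zero (bipartiteness), and since its spectral radius equals $\sqrt{n-1}$ the nonzero eigenvalues are exactly $\pm\sqrt{n-1}$ while the remaining $n-2$ eigenvalues vanish; hence $E_A(K_{1,\,n-1})=2\sqrt{n-1}$. For $P_n$ the eigenvalues are $2\cos\frac{k\pi}{n+1}$, $k=1,\ldots,n$, so $E_A(P_n)=\sum_{k=1}^{n}\bigl|2\cos\frac{k\pi}{n+1}\bigr|$; splitting the sum according to the sign of the cosine and summing the resulting finite cosine series (the parity of $n$ deciding whether a zero middle term occurs) produces the stated $\csc$ and $\cot$ expressions.

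For the two inequalities, write $\phi(T,x)=\sum_{k\geq 0}(-1)^k m(T,k)\,x^{n-2k}$, where $m(T,k)$ is the number of $k$-matchings of $T$ and $m(T,0)=1$. The Coulson integral formula then shows that the energy is a monotone increasing function of each coefficient $m(T,k)$: if $m(T_1,k)\geq m(T_2,k)$ for all $k$, then $E_A(T_1)\geq E_A(T_2)$, with equality only when all matching numbers coincide. It therefore suffices to prove that $K_{1,\,n-1}$ minimizes, and $P_n$ maximizes, every matching number $m(\cdot,k)$ over all trees on $n$ vertices. The minimality of the star is immediate: for $k\geq 2$ one has $m(K_{1,\,n-1},k)=0$ because all edges meet the center, so $m(T,k)\geq 0=m(K_{1,\,n-1},k)$ for every tree $T$, while for $k=0,1$ all trees agree.

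The main obstacle is the maximality of the path. I would argue by a branch-shifting operation together with the edge-deletion recurrence $m(T,k)=m(T-e,k)+m(T-u-v,k-1)$ valid for a pendant edge $e=uv$. Concretely, if $T$ is not a path it contains a vertex of degree at least three; relocating one pendant branch so as to lengthen a path through that vertex can be shown, via a term-by-term comparison of the matching recurrences before and after the move, never to decrease any $m(T,k)$ and to strictly increase at least one. Iterating until no branching vertex remains converts $T$ into $P_n$, yielding $m(P_n,k)\geq m(T,k)$ for all $k$ and hence the upper bound. The delicate point is verifying this monotonicity at each shifting step, i.e.\ controlling the signs in the difference of the two matching generating polynomials.
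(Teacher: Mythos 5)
The paper offers no proof of this lemma at all: it is quoted as a known result from \cite{GP, LSG}, so there is nothing internal to compare against. Judged on its own terms, your proposal reconstructs the standard argument from exactly those sources: the spectra of $K_{1,\,n-1}$ and $P_n$ are computed explicitly (your evaluations are correct, and the cosine summation does yield the stated $\csc$/$\cot$ formulas), and the two inequalities are reduced, via the Coulson integral and the fact that for a tree the characteristic polynomial coincides with the matching polynomial, to showing that the star minimizes and the path maximizes every matching number $m(\cdot,k)$. The star half is indeed trivial, as you say.

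The genuine gap is the path-maximality step, and you flag it yourself. First, the branch-shifting lemma you invoke is true but is the entire content of this direction; asserting that a ``term-by-term comparison of the matching recurrences'' works is not a proof, and the comparison is not routine (the recurrence produces forests of the same order but different shapes on both sides, so the induction has to be set up carefully). Second, as stated the operation is not well defined: at an arbitrary vertex of degree at least three the branches need not be pendant paths, so ``relocating one pendant branch so as to lengthen a path'' only applies after you choose, say, a branching vertex at maximum distance from a fixed leaf, whose hanging subtrees are all paths; and the claimed strict increase plus termination of the iteration also need justification. A cleaner way to close the gap, avoiding branch-shifting entirely, is to prove by induction on $n$ that \emph{every forest} $F$ on $n$ vertices satisfies $m(F,k)\leq m(P_n,k)$ for all $k$: choosing a pendant edge $uv$ of $F$ (with $v$ a leaf), the recurrence gives
$$m(F,k)=m(F-v,k)+m(F-u-v,k-1)\leq m(P_{n-1},k)+m(P_{n-2},k-1)=m(P_n,k),$$
where the last equality is the same recurrence applied to a terminal edge of $P_n$; passing to forests in the induction hypothesis is what makes the deleted graphs admissible. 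With that lemma in hand, your argument is complete, since the non-strict inequalities of the statement do not require any strictness analysis.
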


\begin{lemma}\label{le2,7} 
If $G$ is a connected graph with $k\geq 2$ distinct Sobmor eigenvalues, then $diam(G)\leq k-1$.
\end{lemma}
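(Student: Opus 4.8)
The plan is to exploit the fact that $\mathcal{S}(G)$ is a real symmetric matrix whose zero/nonzero pattern coincides with that of the adjacency matrix of $G$. Since $\sqrt{d_i^2+d_j^2}>0$ whenever $v_iv_j\in E(G)$, the $(i,j)$-entry of $\mathcal{S}(G)$ is strictly positive exactly when $v_i$ and $v_j$ are adjacent, and is zero otherwise. Being real symmetric, $\mathcal{S}(G)$ is diagonalizable, so its minimal polynomial has degree equal to the number $k$ of its distinct eigenvalues; consequently the span of all powers $I,\mathcal{S}(G),\mathcal{S}(G)^2,\dots$ has dimension exactly $k$, and so at most $k$ of these powers can be linearly independent. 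My goal is therefore to bound $diam(G)$ by producing $diam(G)+1$ linearly independent powers of $\mathcal{S}(G)$.

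First I would record the combinatorial meaning of the powers. For any integer $\ell\geq 0$, the $(i,j)$-entry of $\mathcal{S}(G)^\ell$ is a sum, over all walks of length $\ell$ from $v_i$ to $v_j$, of the products of the edge-weights $\sqrt{d_a^2+d_b^2}$ traversed along the walk. The crucial point is that every such weight is strictly positive, so no cancellation can occur: $(\mathcal{S}(G)^\ell)_{ij}>0$ if and only if there is a walk of length $\ell$ from $v_i$ to $v_j$, and $(\mathcal{S}(G)^\ell)_{ij}=0$ otherwise. In particular, if $v_i$ and $v_j$ lie at distance $\ell$, then $(\mathcal{S}(G)^\ell)_{ij}>0$ while $(\mathcal{S}(G)^r)_{ij}=0$ for every $r<\ell$, since no walk shorter than $\ell$ joins them.

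Next I set $d=diam(G)$. Because $G$ is connected, for each $\ell\in\{0,1,\dots,d\}$ there is a pair of vertices at distance exactly $\ell$, so by the previous observation $\mathcal{S}(G)^\ell$ has a nonzero entry in a position where $I,\mathcal{S}(G),\dots,\mathcal{S}(G)^{\ell-1}$ all vanish. This yields linear independence of $I,\mathcal{S}(G),\dots,\mathcal{S}(G)^d$ by a triangular argument: in any vanishing combination $\sum_{\ell=0}^{d}c_\ell\,\mathcal{S}(G)^\ell=0$, reading off the entry at a pair of vertices at distance $d$ forces $c_d=0$ (only the $\mathcal{S}(G)^d$-term survives there), and then descending successively through distances $d-1,d-2,\dots,0$ kills $c_{d-1},\dots,c_0$ in turn. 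Hence $I,\mathcal{S}(G),\dots,\mathcal{S}(G)^d$ are $d+1$ linearly independent matrices, which cannot exceed the maximum number $k$ of independent powers permitted by a degree-$k$ minimal polynomial. Therefore $d+1\leq k$, that is, $diam(G)\leq k-1$.

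The step requiring the most care is the no-cancellation claim in the middle paragraph: it is precisely here that the strict positivity of the Sombor weights is indispensable, since for a symmetric matrix with the same support but signed entries the entries of a power could vanish through cancellation and the argument would break down. Everything else—diagonalizability, the minimal-polynomial bound, and the triangular independence argument—is routine once that positivity is in place.
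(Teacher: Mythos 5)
Your proof is correct, and it takes a genuinely different route from the paper's. You argue by dimension counting: since $\mathcal{S}(G)$ is symmetric, hence diagonalizable, its minimal polynomial has degree $k$, so the algebra spanned by the powers of $\mathcal{S}(G)$ has dimension exactly $k$; you then use the no-cancellation property of nonnegative weights to show that $I,\mathcal{S}(G),\dots,\mathcal{S}(G)^{d}$ (with $d=diam(G)$) are linearly independent via the triangular argument on vertex pairs at each distance $0,1,\dots,d$, forcing $d+1\leq k$. The paper instead goes through the Perron--Frobenius theorem and a spectral projection identity: for a connected graph the unit eigenvector $X$ of $\rho_1$ is entrywise positive, and by a cited result $\prod_{i=2}^{k}(\mathcal{S}-\rho_iI)=\prod_{i=2}^{k}(\rho_1-\rho_i)XX^{T}$, which is therefore an entrywise positive matrix; expanding the left side as a degree-$(k-1)$ polynomial in $\mathcal{S}$ shows that for every pair $i\neq j$ some power $\mathcal{S}^{l}$ with $1\leq l\leq k-1$ has positive $(i,j)$-entry, i.e.\ every pair of vertices is joined by a walk of length at most $k-1$. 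The two arguments bound the same quantity from opposite sides, so to speak: you show the diameter forces the minimal polynomial degree up, while the paper shows the number of eigenvalues forces walk lengths down. Your version is more elementary and self-contained (it needs neither Perron--Frobenius nor the external identity from the reference, only connectivity to produce pairs at every intermediate distance), and it is the direct generalization of the classical textbook proof for the adjacency matrix; the paper's version is shorter given the cited machinery and has the mild bonus of explicitly exhibiting, for every pair of vertices, a connecting walk of length at most $k-1$.
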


\begin{proof} Let $\mathcal{S}$ be the Sombor matrix of $G$ and $\rho_1>\rho_2>\cdots>\rho_k$ be its $k$ distinct Sombor eigenvalues. Let $X$ be the unit (column) eigenvector corresponding the largest eigenvalues $\rho_1$. Then $X$ is a positive vector. From Theorem 2.1 in \cite{LS},  it follows that
$$\prod_{i=2}^{k}(\mathcal{S}-\rho_iI)=\mathcal{S}^{k-1}+c_1\mathcal{S}^{k-2}+c_{k-2}\mathcal{S}+c_{k-1}I=\prod_{i=2}^{k}(\rho_1-\rho_i)XX^T=M.$$
Observe that $(M)_{ij}>0$ for each $i, j =1, 2, \ldots, n$. Therefore, for $i\neq j$, there is a positive integer $l$ with $1\leq l \leq k-1$ such that $(\mathcal{S}^l)_{ij}>0$, which implies that there is a path of length $l$ between $v_i$ and $v_j$, that is, $diam(G)\leq k-1$. The proof is completed. $\Box$
\end{proof}

\begin{lemma}\label{le2,8} 
Let $G$ be a graph with $n$ vertices. Then $|\rho_1|=|\rho_2|=\cdots=|\rho_n|$ if and only if $G\cong \overline{K_n}$ or $G\cong \frac{n}{2}K_2$.
\end{lemma}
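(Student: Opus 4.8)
The plan is to transfer the hypothesis from $\mathcal{S}=\mathcal{S}(G)$ to its square. Since $\mathcal{S}$ is a real symmetric matrix, the condition $|\rho_1|=|\rho_2|=\cdots=|\rho_n|=c$ for some $c\ge 0$ is equivalent to saying that every eigenvalue of $\mathcal{S}^2$ equals $c^2$. As $\mathcal{S}^2$ is symmetric, hence orthogonally diagonalizable, having a single eigenvalue forces $\mathcal{S}^2=c^2 I$. This reformulation is the engine of the whole argument: both directions of the equivalence will be read off from it, and the case split on whether $c=0$ or $c>0$ will separate the two extremal graphs.

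For the easy (sufficiency) direction I would just evaluate $\mathcal{S}$ on the two candidates. For $\overline{K_n}$ there are no edges, so $\mathcal{S}=0$ and every $\rho_i=0$. For $\tfrac n2 K_2$ each edge joins two vertices of degree $1$, so $\mathcal{S}$ is block diagonal with $n/2$ copies of $\left(\begin{smallmatrix}0&\sqrt2\\ \sqrt2&0\end{smallmatrix}\right)$, each block contributing the eigenvalues $\pm\sqrt2$; in both cases all absolute values agree.

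For the converse I would assume $\mathcal{S}^2=c^2 I$ and split on $c$. If $c=0$ then $\mathcal{S}=0$, so $G$ has no edges and $G\cong\overline{K_n}$. If $c>0$, the structure is extracted from the off-diagonal entries: for $i\ne j$,
$$(\mathcal{S}^2)_{ij}=\sum_{k}\mathcal{S}_{ik}\mathcal{S}_{kj}=0,$$
and since every entry of $\mathcal{S}$ is non-negative, each summand must vanish. Thus no vertex $v_k$ is adjacent to two distinct vertices $v_i,v_j$; equivalently every vertex has degree at most $1$, so $G$ is a disjoint union of edges and isolated vertices. An isolated vertex produces a zero row of $\mathcal{S}$, hence an eigenvalue $0$, contradicting $c>0$; therefore $G$ consists solely of independent edges, i.e. $G\cong\tfrac n2 K_2$ (and in particular $n$ is even).

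The main obstacle—really the only subtle point—is justifying the reduction to $\mathcal{S}^2=c^2 I$ cleanly and then squeezing the degree bound out of the vanishing off-diagonal entries; once non-negativity of the entries of $\mathcal{S}$ is invoked, the ``no common neighbour'' conclusion, and hence $\Delta\le 1$, is immediate. As an independent consistency check one can also use the trace identity $\sum_i\rho_i=0$ (valid because $\mathcal{S}$ has zero diagonal): in the case $c>0$ it forces exactly $n/2$ eigenvalues equal to $+c$ and $n/2$ equal to $-c$, confirming that $n$ must be even.
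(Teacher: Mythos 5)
Your proof is correct, and it takes a genuinely different route from the paper's. The paper argues by cases on the structure of $G$: if there is an isolated vertex all eigenvalues must vanish; if $\Delta=1$ then $G\cong\frac n2 K_2$; and if $\Delta\geq 2$ it takes a connected component $H$ on at least $3$ vertices, computes the two largest Sombor eigenvalues of a complete component explicitly, and for a non-complete component combines Cauchy interlacing (its Lemma~2.3, applied to the zero $2\times2$ principal submatrix coming from two non-adjacent vertices, giving $\rho_2(H)\geq 0$) with the simplicity of the Perron eigenvalue to contradict equality of absolute values. You instead linearize the hypothesis: for a symmetric matrix, $|\rho_1|=\cdots=|\rho_n|=c$ is equivalent to $\mathcal{S}^2=c^2I$, and then the structure falls out of the entries — for $i\neq j$ the identity $(\mathcal{S}^2)_{ij}=\sum_k \mathcal{S}_{ik}\mathcal{S}_{kj}=0$ together with non-negativity forces no two vertices to share a neighbour, i.e.\ $\Delta\leq 1$, and positivity of the diagonal entries $c^2$ rules out isolated vertices. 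Your argument is more elementary and self-contained (no interlacing, no Perron--Frobenius), and it visibly generalizes: it proves the same statement for \emph{any} symmetric matrix with positive entries exactly on the edges of $G$, since the Sombor weights $\sqrt{d_i^2+d_j^2}$ play no role beyond being positive. What the paper's approach buys is consistency with its own toolkit — it reuses Lemma~2.3 and the explicit eigenvalue computations that recur in later theorems — but as a proof of this lemma yours is arguably cleaner. One small point worth making explicit when you write it up: in the case $c=0$, the implication $\mathcal{S}^2=0\Rightarrow\mathcal{S}=0$ uses symmetry (e.g.\ $0=\mathrm{tr}(\mathcal{S}^2)=\sum_{i,j}\mathcal{S}_{ij}^2$); for a general square matrix this step would fail.
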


\begin{proof} First we assume that $|\rho_1|=|\rho_2|=\cdots=|\rho_n|$. Let $t$ be the number of isolated vertices in $G$. If $t\geq 1$, then $\rho_1=\rho_2=\cdots=\rho_n=0$ and hence $G\cong \overline{K_n}$. Otherwise, $t=0$. If $\Delta=1$, then $d_1=d_2=\cdots=d_n=1$ and hence $G\cong \frac{n}{2}K_2$. Otherwise, $\Delta\geq 2$. Then $G$ contains a connected component $H$ with at least $3$ vertices. If $H$ is a complete graph with $p\geq 3$ vertices, then $|\rho_1(H)|=\sqrt{2}(p-1)^2>\sqrt{2}(p-1)=|\rho_2(H)|$, a contradiction. Otherwise, $H$ is not a complete graph. By Lemma \ref{le2,3}, $\rho_2(H)\geq 0$. By the Perron-Frobenius theorem, $\rho_1(H)>\rho_2(H)$, a contradiction.
Conversely, one can easily check that $|\rho_1|=|\rho_2|=\cdots=|\rho_n|$ holds for $\overline{K_n}$ and $\frac{n}{2}K_2$.
This completes the proof. $\Box$
\end{proof}

\begin{lemma}\label{le2,9} 
Let $G$ be a graph with Sombor eigenvalues $\rho_1, \rho_2, \ldots, \rho_n$. Then

{\normalfont (i)} $\sum\limits_{i=1}^{n}\rho_i=0,\quad \sum\limits_{i=1}^{n}\rho_i^2=-2\sum\limits_{1\leq i<j\leq n}\rho_i\rho_j, \quad \sum\limits_{i=1}^{n}\rho_i^2=tr(\mathcal{S}^2)=2\sum\limits_{v_iv_j\in E(G)}(d_i^2+d_j^2)=2F$.

{\normalfont (ii)} $\sum\limits_{i=1}^{n}\rho_i^3=tr(\mathcal{S}^3)=2\sum\limits_{i\sim j}\sqrt{d_i^2+d_j^2}\sum\limits_{k\sim i,\, k\sim j}\sqrt{(d_i^2+d_k^2)(d_j^2+d_k^2)}$.

{\normalfont (iii)} $\sum\limits_{i=1}^{n}\rho_i^4=tr(\mathcal{S}^4)  =  \sum\limits_{i=1}^{n}\left(\sum\limits_{i\sim k}(d_i^2+d_k^2)\right)^2+\sum\limits_{i\neq j}\left(\sum\limits_{k\sim i,\, k\sim j}\sqrt{(d_i^2+d_k^2)(d_j^2+d_k^2)}\right)^2$.

\end{lemma}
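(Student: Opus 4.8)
The plan is to derive all the displayed identities from one structural fact: since $\mathcal{S}=\mathcal{S}(G)$ is a real symmetric matrix, it is orthogonally diagonalizable, so writing $\mathcal{S}=Q\,\mathrm{diag}(\rho_1,\dots,\rho_n)\,Q^{T}$ gives $\mathcal{S}^{k}=Q\,\mathrm{diag}(\rho_1^{k},\dots,\rho_n^{k})\,Q^{T}$ and hence $\sum_{i=1}^{n}\rho_i^{k}=tr(\mathcal{S}^{k})$ for every $k\geq 1$. Each right-hand side is then an entrywise evaluation of a power of $\mathcal{S}$, using that $\mathcal{S}_{ij}=\sqrt{d_i^{2}+d_j^{2}}$ when $v_iv_j\in E(G)$, that $\mathcal{S}_{ij}=0$ otherwise, and---crucially---that $\mathcal{S}_{ii}=0$ for all $i$ because $G$ is loopless.

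For (i): the first identity is $\sum_i\rho_i=tr(\mathcal{S})=\sum_i\mathcal{S}_{ii}=0$. The second is purely formal: squaring $\sum_i\rho_i=0$ gives $0=\sum_i\rho_i^{2}+2\sum_{i<j}\rho_i\rho_j$, so $\sum_i\rho_i^{2}=-2\sum_{i<j}\rho_i\rho_j$. For the third, $tr(\mathcal{S}^{2})=\sum_{i,j}\mathcal{S}_{ij}\mathcal{S}_{ji}=\sum_{i,j}\mathcal{S}_{ij}^{2}$ by symmetry, and each edge $v_iv_j$ contributes the value $d_i^{2}+d_j^{2}$ through both ordered pairs $(i,j)$ and $(j,i)$, whence $tr(\mathcal{S}^{2})=2\sum_{v_iv_j\in E(G)}(d_i^{2}+d_j^{2})=2F$ by the definition of $F$ recalled in Section~2.

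For (ii) and (iii) I would route both computations through the entries of $\mathcal{S}^{2}$. A direct expansion gives
$$(\mathcal{S}^{2})_{ij}=\sum_{k}\mathcal{S}_{ik}\mathcal{S}_{kj}=\sum_{k\sim i,\,k\sim j}\sqrt{(d_i^{2}+d_k^{2})(d_j^{2}+d_k^{2})},$$
and in particular $(\mathcal{S}^{2})_{ii}=\sum_{k\sim i}(d_i^{2}+d_k^{2})$. For (ii) I use $tr(\mathcal{S}^{3})=tr(\mathcal{S}\cdot\mathcal{S}^{2})=\sum_{i,j}\mathcal{S}_{ij}(\mathcal{S}^{2})_{ij}$ (after applying the symmetry $(\mathcal{S}^{2})_{ji}=(\mathcal{S}^{2})_{ij}$); only ordered adjacent pairs $(i,j)$ contribute, the summand is invariant under swapping $i$ and $j$, and so each unordered edge is counted twice, which is exactly the factor $2$ in the stated formula. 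For (iii) I instead use $tr(\mathcal{S}^{4})=tr\big((\mathcal{S}^{2})^{2}\big)=\sum_{i,j}(\mathcal{S}^{2})_{ij}^{2}$, again by symmetry of $\mathcal{S}^{2}$, and split the double sum into the diagonal part $i=j$, which yields $\sum_{i}\big(\sum_{k\sim i}(d_i^{2}+d_k^{2})\big)^{2}$, and the off-diagonal part $i\neq j$, which yields the remaining term.

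All the computations are elementary; the only genuine point of care is the multiplicity bookkeeping. Specifically, one must keep track that $\mathcal{S}_{ii}=0$ annihilates every would-be diagonal (loop) contribution in the walk expansions, and one must correctly account for the factor $2$ in (ii)---which arises precisely because $\sum_{i\sim j}$ there ranges over unordered edges while the trace expands over ordered adjacent pairs. I expect this indexing and multiplicity tracking to be the only obstacle.
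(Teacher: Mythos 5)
Your proposal is correct and follows essentially the same route as the paper: both compute the entries $(\mathcal{S}^2)_{ij}=\sum_{k\sim i,\,k\sim j}\sqrt{(d_i^2+d_k^2)(d_j^2+d_k^2)}$ and then obtain $tr(\mathcal{S}^3)$ and $tr(\mathcal{S}^4)$ by expanding the traces through these entries, with the diagonal/off-diagonal split in (iii) and the factor $2$ in (ii) handled identically. Your bookkeeping of the vanishing diagonal and the ordered-versus-unordered edge count is exactly the content of the paper's argument, so nothing is missing.
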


\begin{proof} {\normalfont (i)} It follows from definition.

{\normalfont (ii)} Suppose $i\neq j$. Then
$$(\mathcal{S}^2)_{ij}=\sum\limits_{k=1}^{n}(\mathcal{S})_{ik}(\mathcal{S})_{kj}=\sum\limits_{k\sim i, \, k\sim j}(\mathcal{S})_{ik}(\mathcal{S})_{kj}=\sum\limits_{k\sim i,\, k\sim j}\sqrt{(d_i^2+d_k^2)(d_j^2+d_k^2)}.$$
For the matrix $\mathcal{S}$, we have
\begin{eqnarray*}
(\mathcal{S}^3)_{ii} & = & \sum\limits_{j=1}^{n}(\mathcal{S})_{ij}(\mathcal{S}^2)_{jk}=\sum\limits_{i\sim j}(\sqrt{d_i^2+d_j^2})(\mathcal{S}^2)_{jk}\\
& = & \sum\limits_{i\sim j}\sqrt{d_i^2+d_j^2}\sum\limits_{k\sim i,\, k\sim j}\sqrt{(d_i^2+d_k^2)(d_j^2+d_k^2)}.
\end{eqnarray*}
Thus
\begin{eqnarray*}
tr(\mathcal{S}^3) & = & \sum\limits_{i=1}^{n}\sum\limits_{i\sim j}\sqrt{d_i^2+d_j^2}\sum\limits_{k\sim i,\, k\sim j}\sqrt{(d_i^2+d_k^2)(d_j^2+d_k^2)}\\
& = & 2\sum\limits_{i\sim j}\sqrt{d_i^2+d_j^2}\sum\limits_{k\sim i,\, k\sim j}\sqrt{(d_i^2+d_k^2)(d_j^2+d_k^2)}.
\end{eqnarray*}

{\normalfont (iii)} The trace of $\mathcal{S}$ is
\begin{eqnarray*}
tr(\mathcal{S}^4) & = & \sum\limits_{i, j=1}^{n}(\mathcal{S}^2)_{ij}^2=\sum\limits_{i=j}(\mathcal{S}^2)_{ij}^2+\sum\limits_{i\neq j}(\mathcal{S}^2)_{ij}^2\\
& = & \sum\limits_{i=1}^{n}\left(\sum\limits_{i\sim k}(d_i^2+d_k^2)\right)^2+\sum\limits_{i\neq j}\left(\sum\limits_{k\sim i,\, k\sim j}\sqrt{(d_i^2+d_k^2)(d_j^2+d_k^2)}\right)^2.
\end{eqnarray*}
This completes the proof. $\Box$
\end{proof}

\section{\large  On spectral radius of the Sombor matrix}

\begin{theorem}\label{th3,1} 
Let $G$ be a graph with $n$ vertices, the maximum degree $\Delta$ and minimum degree $\delta$. Then
$$\sqrt{2}\delta\lambda_1\leq \rho_1 \leq \sqrt{2}\Delta\lambda_1 \eqno{(3.1)}$$
with equality if and only if $G$ is a regular graph.
\end{theorem}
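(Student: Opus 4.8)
The plan is to exploit the fact that the Sombor matrix and the adjacency matrix are comparable entrywise. Since $\delta\le d_i\le\Delta$ for every vertex, each edge weight satisfies $\sqrt{2}\,\delta\le\sqrt{d_i^2+d_j^2}\le\sqrt{2}\,\Delta$, so as nonnegative symmetric matrices
$$\sqrt{2}\,\delta\,A\;\le\;\mathcal{S}\;\le\;\sqrt{2}\,\Delta\,A\qquad\text{(entrywise).}$$
Both bounds in (3.1) should then fall out of the Rayleigh characterization of the largest eigenvalue together with the Perron--Frobenius theorem, which supplies a nonnegative eigenvector of the relevant matrix. The only care needed is to feed each Rayleigh quotient a test vector of the correct sign so that the entrywise comparison propagates to the quadratic forms.

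For the lower bound, let $X\ge 0$ be a Perron eigenvector of $A$, so $X^{T}AX=\lambda_1 X^{T}X$. Because $X\ge 0$ and $\mathcal{S}\ge\sqrt{2}\,\delta\,A$ entrywise, every term of the quadratic form is nonnegative and
$$\rho_1\;\ge\;\frac{X^{T}\mathcal{S}X}{X^{T}X}\;=\;\frac{2\sum_{v_iv_j\in E(G)}\sqrt{d_i^2+d_j^2}\,X_iX_j}{X^{T}X}\;\ge\;\sqrt{2}\,\delta\,\frac{X^{T}AX}{X^{T}X}\;=\;\sqrt{2}\,\delta\,\lambda_1.$$
For the upper bound I would instead start from a nonnegative Perron eigenvector $Y\ge 0$ of $\mathcal{S}$, so that $\rho_1=Y^{T}\mathcal{S}Y/Y^{T}Y$; applying $\mathcal{S}\le\sqrt{2}\,\Delta\,A$ and then the Rayleigh bound $Y^{T}AY\le\lambda_1 Y^{T}Y$ yields $\rho_1\le\sqrt{2}\,\Delta\,\lambda_1$. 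The two arguments are mirror images, the only asymmetry being which matrix provides the test vector.

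The harder half is the equality discussion, and this is where strict positivity of the Perron vector (hence connectivity) does the work. If $G$ is regular of degree $r$, then $\delta=\Delta=r$ and every edge weight equals $\sqrt{2}\,r$, so $\mathcal{S}=\sqrt{2}\,r\,A$ and both inequalities are trivially equalities. Conversely, assume $G$ is connected, so $X$ and $Y$ are strictly positive. Equality in the lower bound forces the middle inequality to be tight, i.e. $\sum_{v_iv_j\in E(G)}\bigl(\sqrt{d_i^2+d_j^2}-\sqrt{2}\,\delta\bigr)X_iX_j=0$; since each summand is nonnegative and each coefficient $X_iX_j$ is positive, we get $d_i=d_j=\delta$ on every edge, and connectivity then forces every degree to equal $\delta$, so $G$ is regular. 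The upper bound is symmetric: equality forces $\sqrt{d_i^2+d_j^2}=\sqrt{2}\,\Delta$ on every edge, whence $G$ is $\Delta$-regular.

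I expect the genuine obstacle to lie precisely in this equality analysis once connectivity is dropped. The inequalities themselves hold verbatim for an arbitrary graph through a nonnegative Perron vector, but the characterization ``$G$ regular'' relies on that vector being \emph{strictly} positive, which fails when the extremal eigenvalue is contributed by a single proper component (for example $K_3\cup K_2$ attains the upper bound without being regular). I would therefore either state the equality cases for connected $G$, or argue component-by-component: isolate the component realizing $\rho_1$ and show that tightness forces that component to be $\delta$- (respectively $\Delta$-) regular and to also realize $\lambda_1$, which is the delicate point to pin down carefully.
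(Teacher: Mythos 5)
Your proof is correct and follows essentially the same route as the paper's: Rayleigh quotients fed with the nonnegative Perron eigenvectors of $A$ (for the lower bound) and of $\mathcal{S}$ (for the upper bound), combined with the entrywise comparison $\sqrt{2}\,\delta A\le\mathcal{S}\le\sqrt{2}\,\Delta A$. Your closing caveat moreover identifies a genuine defect in the paper itself: its equality analysis silently assumes the eigenvector entries are strictly positive (i.e. $G$ connected), and your example $K_3\cup K_2$, where $\rho_1=4\sqrt{2}=\sqrt{2}\,\Delta\lambda_1$ yet the graph is not regular, shows that the stated equality characterization of the upper bound really does fail for disconnected graphs, so restricting to connected $G$ (or arguing component-by-component, as you suggest) is necessary.
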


\begin{proof} Firstly, we prove the left hand side of (3.1). Let $X=(x_1, x_2, \ldots, x_n)$ be a unit eigenvector of $G$ corresponding to $\lambda_1$. By the Rayleigh-Ritz Theorem, we have
$$\rho_1 \geq X^T\mathcal{S}X\geq 2\sum\limits_{v_iv_j\in E(G)}\sqrt{d_i^2+d_j^2}x_ix_j\geq 2\sqrt{2}\delta\sum\limits_{v_iv_j\in E(G)}x_ix_j=\sqrt{2}\delta\lambda_1.\eqno{(3.2)}$$

Now suppose that equality in the left hand side of (3.1) holds. Then inequality in (3.2) must be equality if and only if $d_i=d_j$ for each edge $v_iv_j\in E(G)$, that is, $G$ is regular. Clearly, if $G$ is a regular graph, the equality in the left hand side of (3.1) holds.

Secondly, we prove the right hand side of (3.1). Let $Y=(y_1, y_2, \ldots, y_n)$ be a unit eigenvector of $G$ corresponding to $\rho_1$. By the Rayleigh-Ritz Theorem, we have
$$\lambda_1 \geq Y^TAY\geq 2\sum\limits_{v_iv_j\in E(G)}y_iy_j.$$
Similarly, we have
$$\rho_1= Y^T\mathcal{S}Y= 2\sum\limits_{v_iv_j\in E(G)}\sqrt{d_i^2+d_j^2}y_iy_j\leq 2\sqrt{2}\Delta\sum\limits_{v_iv_j\in E(G)}y_iy_j\leq \sqrt{2}\Delta\lambda_1.\eqno{(3.3)}$$

Now suppose that equality in the right hand side of (3.1) holds. Then inequality in (3.3) must be equality if and only if $d_i=d_j$ for any $v_iv_j\in E(G)$, that is $G$ is a regular graph. Conversely, it is easy to verify that the equality in the right hand side of (3.1) holds when $G$ is a regular graph.

Combining the above arguments, we have the proof. $\Box$

\end{proof}

By Lemmas \ref{le2,1}, \ref{le2,2} and Theorem \ref{th3,1}, we have the following corollaries.

\begin{corollary}\label{cor3,1} 
Let $G$ be a connected graph with $n$ vertices, $m$ edges, the maximum degree $\Delta$ and minimum degree $\delta$. Then
$$\delta\sqrt{\frac{2Z_1}{n}}\leq \rho_1 \leq \sqrt{2}\Delta^2.$$
The equality holds if and only if $G$ is a regular graph.
\end{corollary}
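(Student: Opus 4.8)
The plan is to chain the inequalities of Theorem \ref{th3,1} with those of Lemma \ref{le2,1}, since both involve $\lambda_1$ as the intermediate quantity. For the lower bound I would start from the left inequality of Theorem \ref{th3,1}, namely $\rho_1 \geq \sqrt{2}\delta\lambda_1$, and then substitute the left inequality $\lambda_1 \geq \sqrt{Z_1/n}$ of Lemma \ref{le2,1}. Since $\sqrt{2}\delta \geq 0$, multiplying through preserves the direction and gives $\rho_1 \geq \sqrt{2}\delta\sqrt{Z_1/n} = \delta\sqrt{2Z_1/n}$, which is exactly the claimed lower bound.

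For the upper bound I would proceed symmetrically: start from the right inequality $\rho_1 \leq \sqrt{2}\Delta\lambda_1$ of Theorem \ref{th3,1} and insert the right inequality $\lambda_1 \leq \Delta$ of Lemma \ref{le2,1}, which is available because $G$ is connected. Multiplying by $\sqrt{2}\Delta \geq 0$ yields $\rho_1 \leq \sqrt{2}\Delta\cdot\Delta = \sqrt{2}\Delta^2$, the claimed upper bound.

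The part requiring care is the equality discussion, since each bound is obtained by combining two separate inequalities, each carrying its own equality condition. For the lower bound, equality forces both $\rho_1 = \sqrt{2}\delta\lambda_1$ and $\lambda_1 = \sqrt{Z_1/n}$; by Theorem \ref{th3,1} the first holds if and only if $G$ is regular, and by Lemma \ref{le2,1} the second holds if and only if $G$ is regular or semiregular. The intersection of these conditions is simply that $G$ is regular. For the upper bound, equality forces $\rho_1 = \sqrt{2}\Delta\lambda_1$ and $\lambda_1 = \Delta$, and for a connected $G$ both hold if and only if $G$ is regular. Conversely, if $G$ is $r$-regular then $\lambda_1 = r$, $\sqrt{Z_1/n} = r$ and $\Delta = \delta = r$, so a direct substitution confirms that both bounds equal $\sqrt{2}r^2 = \rho_1$, and hence $G$ regular indeed yields equality in each case.

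I expect no serious obstacle here; the only point to watch is that the apparently weaker equality condition from Lemma \ref{le2,1} (regular or semiregular) does not enlarge the equality class, because the companion condition from Theorem \ref{th3,1} already forces $G$ to be regular. I would state this intersection explicitly to justify that semiregular-but-not-regular graphs are excluded from the extremal family.
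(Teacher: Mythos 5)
Your proposal is correct and matches the paper's own argument: the paper derives this corollary precisely by chaining Theorem \ref{th3,1} with Lemma \ref{le2,1}, exactly as you do. Your explicit treatment of the equality case (noting that the regularity condition from Theorem \ref{th3,1} already rules out the semiregular-but-not-regular graphs allowed by Lemma \ref{le2,1}) is a point the paper leaves implicit, and it is handled correctly.
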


\begin{corollary}\label{cor3,2} 
Let $G$ be a connected graph with $n$ vertices, $m$ edges, the maximum degree $\Delta$ and minimum degree $\delta$. Then
$$\frac{2\sqrt{2}m\delta}{n}\leq \rho_1 \leq \Delta \sqrt{4m-2n+2}.$$
The equality in the left hand side holds if and only if $G$ is a regular graph, and the equality in the right hand side holds if and only if $G\cong K_n$.
\end{corollary}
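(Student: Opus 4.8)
The plan is to obtain both bounds by simply chaining Theorem \ref{th3,1} with Lemma \ref{le2,2}, and then to extract the equality characterizations by intersecting the equality conditions of the two ingredients. For the lower bound I would start from the left inequality $\rho_1 \geq \sqrt{2}\delta\lambda_1$ of (3.1) and substitute the left bound $\lambda_1 \geq 2m/n$ from Lemma \ref{le2,2}, giving
$$\rho_1 \geq \sqrt{2}\delta\lambda_1 \geq \sqrt{2}\delta\cdot\frac{2m}{n} = \frac{2\sqrt{2}m\delta}{n}.$$
For the upper bound I would combine the right inequality $\rho_1 \leq \sqrt{2}\Delta\lambda_1$ with $\lambda_1 \leq \sqrt{2m-n+1}$, so that
$$\rho_1 \leq \sqrt{2}\Delta\lambda_1 \leq \sqrt{2}\Delta\sqrt{2m-n+1} = \Delta\sqrt{4m-2n+2},$$
where the last step merely absorbs the factor $\sqrt{2}$ under the radical. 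Both numerical bounds are thus immediate once the two cited results are in hand.

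The equality analysis is where the two cases diverge, and it is the only part requiring care. For the lower bound, equality forces both links in the chain to be tight at once: by Theorem \ref{th3,1} the relation $\rho_1 = \sqrt{2}\delta\lambda_1$ holds if and only if $G$ is regular, and by Lemma \ref{le2,2} the relation $\lambda_1 = 2m/n$ holds if and only if $G$ is regular. Since the two conditions coincide, I would conclude that equality on the left holds precisely when $G$ is regular; conversely, a direct substitution (for an $r$-regular graph $m=nr/2$, $\lambda_1=r$, $\rho_1=\sqrt{2}r^2$) confirms that every regular graph attains it.

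For the upper bound the two equality conditions no longer match, and reconciling them is the main, if modest, obstacle. Theorem \ref{th3,1} gives $\rho_1 = \sqrt{2}\Delta\lambda_1$ if and only if $G$ is regular, whereas Lemma \ref{le2,2} gives $\lambda_1 = \sqrt{2m-n+1}$ if and only if $G \cong K_{1,\,n-1}$ or $G \cong K_n$. The decisive observation I would make is that the star $K_{1,\,n-1}$ is regular only in the degenerate case $n=2$, where it already coincides with $K_2=K_n$; for $n \geq 3$ it has a central vertex of degree $n-1$ and leaves of degree $1$, hence is not regular. Therefore the intersection of the two equality conditions is exactly $G \cong K_n$. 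I would finish by verifying that $K_n$ does attain the upper bound: substituting $\Delta=n-1$ and $m=\binom{n}{2}$ yields $\Delta\sqrt{4m-2n+2}=(n-1)\sqrt{2(n-1)^2}=\sqrt{2}(n-1)^2=\rho_1(K_n)$, which matches the value forced by regularity. This gives the claimed characterization.

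I expect no genuine difficulty here: the substitutions are routine and the entire content lies in the bookkeeping of the upper-bound equality, namely noticing that the star entry in Lemma \ref{le2,2} is eliminated by the regularity requirement coming from Theorem \ref{th3,1}.
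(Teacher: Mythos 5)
Your proposal is correct and follows exactly the route the paper intends: the paper gives no separate proof of this corollary, stating only that it follows from Lemma \ref{le2,2} and Theorem \ref{th3,1}, which is precisely your chaining argument. Your equality analysis—in particular the observation that regularity eliminates the star $K_{1,\,n-1}$ from the upper-bound case, leaving only $K_n$—correctly fills in the bookkeeping the paper leaves implicit.
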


\begin{theorem}\label{th3,2} 
Let $G$ be a graph with $n$ vertices. Then
$$\sqrt{\frac{2F}{n}}\leq \rho_1 \leq \sqrt{\frac{2(n-1)F}{n}}.\eqno{(3.4)}$$
The equality in the left hand side holds if and only if $G\cong \overline{K_n}$ or $G\cong \frac{n}{2}K_2$. If $G$ is a connected graph, then the equality in the right hand side holds if and only if $G\cong K_n$.
\end{theorem}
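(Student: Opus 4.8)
The plan is to derive both bounds from the two moment identities supplied by Lemma~\ref{le2,9}(i), namely $\sum_{i=1}^{n}\rho_i=0$ and $\sum_{i=1}^{n}\rho_i^2=tr(\mathcal{S}^2)=2F$. Since $\mathcal{S}$ is a nonnegative symmetric matrix, the Perron--Frobenius theorem guarantees that $\rho_1$ is the spectral radius, so $\rho_1\geq 0$ and $\rho_1=\max_i|\rho_i|$; in particular $\rho_1^2\geq \rho_i^2$ for every $i$. Both inequalities in (3.4) then amount to comparing $\rho_1$ against the rest of the spectrum through these two identities.

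For the left-hand side, I would use that $\rho_1^2$, being the maximum of the $\rho_i^2$, is at least their average:
$$\rho_1^2\geq \frac{1}{n}\sum_{i=1}^{n}\rho_i^2=\frac{2F}{n},$$
so that $\rho_1\geq\sqrt{2F/n}$. Equality forces every $\rho_i^2$ to coincide with the average, i.e. $|\rho_1|=|\rho_2|=\cdots=|\rho_n|$, at which point Lemma~\ref{le2,8} identifies the extremal graphs as exactly $\overline{K_n}$ and $\frac{n}{2}K_2$.

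For the right-hand side, I would isolate $\rho_1$ and apply the Cauchy--Schwarz inequality to the remaining $n-1$ eigenvalues. Using $\sum_{i=2}^{n}\rho_i=-\rho_1$ and $\sum_{i=2}^{n}\rho_i^2=2F-\rho_1^2$, we get
$$\rho_1^2=\left(\sum_{i=2}^{n}\rho_i\right)^2\leq (n-1)\sum_{i=2}^{n}\rho_i^2=(n-1)\left(2F-\rho_1^2\right),$$
which rearranges to $n\rho_1^2\leq 2(n-1)F$, i.e. $\rho_1\leq\sqrt{2(n-1)F/n}$. Equality in Cauchy--Schwarz holds precisely when $\rho_2=\rho_3=\cdots=\rho_n$.

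The main obstacle is the equality analysis on the right in the connected case. Here I would argue that when $G$ is connected the matrix $\mathcal{S}$ is irreducible, so Perron--Frobenius makes $\rho_1$ a simple eigenvalue and hence $\rho_1>\rho_2$; together with $\rho_2=\cdots=\rho_n$ this means $\mathcal{S}$ has exactly two distinct eigenvalues. Lemma~\ref{le2,7} with $k=2$ then yields $diam(G)\leq 1$, which forces $G\cong K_n$. The converse is a direct check: for $K_n$ every degree equals $n-1$, so $\mathcal{S}(K_n)=\sqrt{2}(n-1)A$, whose spectrum consists of $\sqrt{2}(n-1)^2$ once and $-\sqrt{2}(n-1)$ with multiplicity $n-1$, confirming $\rho_2=\cdots=\rho_n$ and hence equality.
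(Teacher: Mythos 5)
Your proposal is correct and follows essentially the same route as the paper: both bounds come from the trace identities of Lemma~\ref{le2,9}(i) (max-versus-average for the lower bound, Cauchy--Schwarz on $\rho_2,\ldots,\rho_n$ for the upper bound), with Lemma~\ref{le2,8} handling the left equality case and Lemma~\ref{le2,7} plus the direct computation $\mathcal{S}(K_n)=\sqrt{2}(n-1)A(K_n)$ handling the right. Your additions—explicitly invoking Perron--Frobenius to justify $\rho_1=\max_i|\rho_i|$ and to get simplicity of $\rho_1$ (ensuring exactly two distinct eigenvalues before applying Lemma~\ref{le2,7})—are refinements of steps the paper leaves implicit, not a different argument.
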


\begin{proof} By Lemma \ref{le2,9}, we have $n\rho_1^2\geq 2F$. The equality holds if and only if $|\rho_1|=|\rho_2|=\cdots=|\rho_n|$, by Lemma \ref{le2,8}, we have the result. By the Cauchy-Schwarz inequality, we have
$$\rho_1^2=2F-\sum\limits_{i=2}^{n}\rho_i^2\leq 2F-\frac{1}{n-1}\left(\sum\limits_{i=2}^{n}\rho_i\right)^2=2F-\frac{1}{n-1}\rho_1^2.$$
Thus $\rho_1 \leq \sqrt{\frac{2(n-1)F}{n}}$. The equality holds if and only if $\rho_2=\rho_3=\cdots=\rho_n$, by Lemma \ref{le2,7}, we have $diam(G)=1$, that is $G\cong K_n$. Conversely, if $G\cong K_{n}$, then $\mathcal{S}(G)=\sqrt{2}(n-1)A(G)$. Thus $\rho_1(G)=\sqrt{2}(n-1)^2$, $\rho_2(G)=\cdots=\rho_{n}(G)=-\sqrt{2}(n-1)$. It is easy to check that equality holds in the right hand side in (3.4). This completes the proof. $\Box$
\end{proof}

\begin{theorem}\label{th3,3} 
Let $uv$ be an edge of a connected graph $G$ with $d_u\geq 2$ and $d_v\geq 2$. Let $X$ be a unit eigenvector of $G$ corresponding to $\rho_1(G)$, and let $N_G(u)\cap N_G(v)=\Phi$ and $G^{*}=G-\{vw:w\in N_G(v)\backslash \{u\}\}+\{uw:w\in N_G(v)\backslash \{u\}\}$. If $x_u\geq x_v$, then $\rho_1(G^{*})>\rho_1(G)$.
\end{theorem}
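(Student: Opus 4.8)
The plan is to compare the two Sombor matrices via the Rayleigh quotient of the Perron eigenvector of $G$. Since $G$ is connected, $\mathcal{S}(G)$ is a nonnegative irreducible symmetric matrix, so by the Perron--Frobenius theorem the unit eigenvector $X=(x_1,\dots,x_n)$ for $\rho_1(G)$ may be taken with all entries $x_i>0$; in particular $\rho_1(G)=X^T\mathcal{S}(G)X$. Because $X$ is merely a unit vector with respect to $\mathcal{S}(G^{*})$, the Rayleigh--Ritz theorem (as used in Theorem \ref{th3,1}) gives $\rho_1(G^{*})\ge X^T\mathcal{S}(G^{*})X$. Hence it suffices to establish the strict inequality $X^T\mathcal{S}(G^{*})X> X^T\mathcal{S}(G)X$, from which $\rho_1(G^{*})>\rho_1(G)$ follows at once.

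The subtlety I expect to be the main obstacle is that the transformation changes vertex degrees, so the Sombor weights of many edges---not only those deleted or added---are altered, and one cannot argue by a naive edge-by-edge comparison. Concretely, moving all edges of $v$ except $uv$ onto $u$ leaves every degree fixed except that $d_u$ becomes $d_u+d_v-1$ and $d_v$ becomes $1$; here the hypothesis $N_G(u)\cap N_G(v)=\Phi$ guarantees that no multi-edge is created and that each $w\in N_G(u)\setminus\{v\}$ as well as each $w\in N_G(v)\setminus\{u\}$ retains its degree. Writing $X^T\mathcal{S}X=2\sum_{v_iv_j\in E(G)}\sqrt{d_i^2+d_j^2}\,x_ix_j$ and cancelling the unchanged contributions of edges disjoint from $\{u,v\}$, the difference $\tfrac12\big(X^T\mathcal{S}(G^{*})X-X^T\mathcal{S}(G)X\big)$ splits into three groups: the edge $uv$; the ``old'' edges $uw$ with $w\in N_G(u)\setminus\{v\}$ (whose weight changes only because $d_u$ grows); and the ``moved'' edges, appearing as $vw$ in $G$ but as $uw$ in $G^{*}$, for $w\in N_G(v)\setminus\{u\}$.

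I would then verify that each group contributes a strictly positive amount, using $x_i>0$. For the edge $uv$ the weight changes from $\sqrt{d_u^2+d_v^2}$ to $\sqrt{(d_u+d_v-1)^2+1}$, and the identity
\[(d_u+d_v-1)^2+1-(d_u^2+d_v^2)=2(d_u-1)(d_v-1)>0\]
(using $d_u,d_v\ge 2$) makes this contribution positive. For each old edge $uw$ the weight strictly increases, since $d_u+d_v-1>d_u$, and $x_ux_w>0$, so this group is positive. The decisive use of $x_u\ge x_v$ occurs in the moved edges, where the relevant bracket is $\sqrt{(d_u+d_v-1)^2+d_w^2}\,x_u-\sqrt{d_v^2+d_w^2}\,x_v$; since $d_u+d_v-1>d_v$ and $x_u\ge x_v>0$ we get $\sqrt{(d_u+d_v-1)^2+d_w^2}\,x_u\ge \sqrt{(d_u+d_v-1)^2+d_w^2}\,x_v>\sqrt{d_v^2+d_w^2}\,x_v$, so every term is positive. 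Summing the three strictly positive groups yields $X^T\mathcal{S}(G^{*})X>X^T\mathcal{S}(G)X$, and combined with the Rayleigh--Ritz bound this completes the argument.
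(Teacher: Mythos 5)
Your proposal is correct and follows essentially the same route as the paper: apply Rayleigh--Ritz with the positive Perron eigenvector $X$ of $\mathcal{S}(G)$, split $X^T\mathcal{S}(G^{*})X-X^T\mathcal{S}(G)X$ into the edge $uv$, the old edges at $u$, and the moved edges, and use $x_u\geq x_v$ precisely on the moved-edge group. Your explicit computation $(d_u+d_v-1)^2+1-(d_u^2+d_v^2)=2(d_u-1)(d_v-1)>0$ and your remark on why $N_G(u)\cap N_G(v)=\Phi$ keeps the other degrees unchanged are details the paper leaves implicit, but the argument is the same.
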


\begin{proof} Since $X$ is a unit eigenvector of $\rho_1(G)$, $X$ is positive if $G$ is connected. From the hypothesis, we have
\begin{eqnarray*}
\rho_1(G^{*})-\rho_1(G) & \geq & X^T\mathcal{S}(G^{*})X-X^T\mathcal{S}(G)X\\
\end{eqnarray*}
\begin{eqnarray*}
& = & 2\sum\limits_{p\in N_G(u)\backslash \{v\}}(\sqrt{d_{p}^2+(d_u+d_v-1)^2}-\sqrt{d_{p}^2+d_u^2})x_ux_{p}+\\
& & 2\sum\limits_{w\in N_G(v)\backslash \{u\}}(\sqrt{d_{w}^2+(d_u+d_v-1)^2}x_u-\sqrt{d_{w}^2+d_v^2}x_v)x_{w}+\\
& & 2(\sqrt{1^2+(d_u+d_v-1)^2}-\sqrt{d_{u}^2+d_v^2})x_{u}x_{v}\\
& \geq & 2\sum\limits_{p\in N_G(u)\backslash \{v\}}(\sqrt{d_{p}^2+(d_u+d_v-1)^2}-\sqrt{d_{p}^2+d_u^2})x_ux_{p}+\\
& & 2\sum\limits_{w\in N_G(v)\backslash \{u\}}(\sqrt{d_{w}^2+(d_u+d_v-1)^2}-\sqrt{d_{w}^2+d_v^2})x_vx_{w}+\\
& & 2(\sqrt{1^2+(d_u+d_v-1)^2}-\sqrt{d_{u}^2+d_v^2})x_{u}x_{v}\\
& > & 0.
\end{eqnarray*}
The proof is completed. $\Box$
\end{proof}

\begin{corollary}\label{cor3,3} 
Let $T_n$ be a tree with $n$ vertices. Then
$$\rho_1(T_n)\leq \sqrt{(n-1)(n^2-2n+2)}$$
with equality if and only if $G=K_{1,\, n-1}$.
\end{corollary}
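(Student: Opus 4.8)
The plan is to show that among all trees on $n$ vertices, the star $K_{1,n-1}$ maximizes the Sombor spectral radius, and then to compute $\rho_1(K_{1,n-1})$ explicitly. The strategy rests entirely on Theorem~\ref{th3,3}, which is a ``shifting'' or ``Kelmans-type'' transformation: whenever a tree has an edge $uv$ with both endpoints of degree at least $2$, we can move all the neighbors of the lower-potential endpoint onto the higher-potential endpoint and strictly increase $\rho_1$. First I would argue that any tree $T_n$ that is \emph{not} a star must contain an edge $uv$ with $d_u\geq 2$ and $d_v\geq 2$: indeed, a tree whose every edge has an endpoint of degree $1$ is precisely a star, so a non-star tree has two adjacent internal vertices. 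Since $T_n$ is a tree, $N_G(u)\cap N_G(v)=\Phi$ holds automatically (a common neighbor would create a triangle). Labeling $u,v$ so that $x_u\geq x_v$ in the Perron eigenvector, Theorem~\ref{th3,3} gives a new graph $G^{*}$ with strictly larger Sombor spectral radius.

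The second step is to check that this transformation keeps us within the class of trees on $n$ vertices and strictly decreases some well-ordered complexity measure, so that iterating it must terminate at the star. The operation $G^{*}=G-\{vw:w\in N_G(v)\setminus\{u\}\}+\{uw:w\in N_G(v)\setminus\{u\}\}$ removes $d_v-1$ edges and adds $d_v-1$ edges, preserving the edge count $n-1$; because $u$ and $v$ share no neighbor and $uv$ is a bridge in the tree, no cycle is created and the graph stays connected, hence $G^{*}$ is again a tree on $n$ vertices. A clean way to force termination is to track the number of vertices of degree $1$ (leaves): after the shift, $v$ becomes a leaf while $u$ absorbs $v$'s former neighbors, so the number of internal vertices strictly drops. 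Since a star is the unique tree with exactly one internal vertex and every other tree has at least two, repeating the transformation a finite number of times must yield $K_{1,n-1}$, and each step strictly increased $\rho_1$; therefore $\rho_1(T_n)\leq \rho_1(K_{1,n-1})$ with equality iff $T_n\cong K_{1,n-1}$.

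The final step is the explicit evaluation. For $K_{1,n-1}$ the center has degree $n-1$ and the $n-1$ leaves have degree $1$, so every edge-entry of $\mathcal{S}$ equals $\sqrt{(n-1)^2+1^2}=\sqrt{n^2-2n+2}$. Thus $\mathcal{S}(K_{1,n-1})=\sqrt{n^2-2n+2}\,A(K_{1,n-1})$, and since $\lambda_1(K_{1,n-1})=\sqrt{n-1}$, we obtain
$$\rho_1(K_{1,n-1})=\sqrt{n^2-2n+2}\cdot\sqrt{n-1}=\sqrt{(n-1)(n^2-2n+2)},$$
which is exactly the claimed bound.

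I expect the main obstacle to be the termination/monotonicity bookkeeping rather than any analytic difficulty: one must verify that $G^{*}$ remains a tree (connectivity and acyclicity), that the hypotheses of Theorem~\ref{th3,3}---in particular $d_u\geq 2$, $d_v\geq 2$ and the disjoint-neighborhood condition---are genuinely met at every stage, and that a monovariant such as the leaf count drives the process to the star in finitely many steps while each step is strictly $\rho_1$-increasing. Establishing equality only for $K_{1,n-1}$ then follows because the transformation applies to \emph{any} non-star tree and is strictly increasing, so no non-star can attain the maximum.
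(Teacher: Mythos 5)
Your proof is correct and follows exactly the route the paper intends: Corollary~\ref{cor3,3} is stated immediately after Theorem~\ref{th3,3} precisely so that it follows by iterating that shifting transformation on any non-star tree (each step staying within trees and strictly increasing $\rho_1$) until the star is reached, combined with the explicit computation $\mathcal{S}(K_{1,n-1})=\sqrt{n^2-2n+2}\,A(K_{1,n-1})$ and $\lambda_1(K_{1,n-1})=\sqrt{n-1}$. Your treatment of the details the paper leaves implicit (existence of an edge with both endpoints internal in a non-star tree, the empty common neighborhood in a tree, preservation of tree structure, and termination via the decreasing count of internal vertices) is sound.
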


Li and Wang \cite{LW} showed that the path $P_n$ is uniquely the tree on $n\geq 9$ vertices with the smallest Sombor spectral radius among all trees. Thus we have

\begin{theorem}{\bf (\cite{LW})}\label{th3,4} 
Let $T_n$ be a tree with $n\geq 9$ vertices. Then
$$\rho_1(P_n)\leq \rho_1(T_n)\leq \rho_1(K_{1,\, n-1}).$$
The equality in the left hand side holds if and only if $T_n\cong P_n$, and the equality in the right hand side holds if and only if $T_n\cong K_{1,\,n-1}$.
\end{theorem}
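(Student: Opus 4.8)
The statement splits into an upper bound and a lower bound, which I would treat separately. The right-hand inequality $\rho_1(T_n) \le \rho_1(K_{1,n-1})$ is essentially already in hand: since $K_{1,n-1}$ has every edge joining its degree-$(n-1)$ center to a degree-$1$ leaf, $\mathcal{S}(K_{1,n-1}) = \sqrt{(n-1)^2+1}\,A(K_{1,n-1})$, so $\rho_1(K_{1,n-1}) = \sqrt{(n-1)^2+1}\cdot\sqrt{n-1} = \sqrt{(n-1)(n^2-2n+2)}$, and Corollary \ref{cor3,3} asserts exactly that $\rho_1(T_n) \le \sqrt{(n-1)(n^2-2n+2)}$ with equality if and only if $T_n \cong K_{1,n-1}$. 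Thus the whole substance of the theorem is the left-hand inequality, namely that $P_n$ uniquely minimizes the Sombor spectral radius among trees on $n \ge 9$ vertices.

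For the lower bound my plan is a grafting (branch-shifting) argument: I would show that every tree $T_n \not\cong P_n$ can be transformed into $P_n$ by a finite sequence of local operations, each of which strictly decreases $\rho_1$. Concretely, if $T_n \not\cong P_n$ then $T_n$ has a vertex of degree at least $3$; fix a longest path of $T_n$ and let $v$ be a branching vertex on it carrying a pendant branch $B$ off the path. The operation detaches $B$ from $v$ and reattaches it so as to lengthen a pendant path (equivalently, it unbalances the two longest pendant paths meeting at $v$), producing a tree $T'$ with either one fewer branching vertex or a strictly longer spine. A suitable lexicographic progress measure guarantees that iterating drives any tree to $P_n$, so once each step satisfies $\rho_1(T') < \rho_1(T)$ the chain $\rho_1(T_n) > \cdots > \rho_1(P_n)$ yields both the strict inequality and the uniqueness of the extremal graph.

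To prove the single-step decrease $\rho_1(T') < \rho_1(T)$ I would use the Rayleigh–Ritz argument in the spirit of Theorem \ref{th3,1} and Theorem \ref{th3,3}, but applied to the graph with the smaller spectral radius: letting $Y$ be the positive Perron eigenvector of $T'$, one has $\rho_1(T) \ge Y^T\mathcal{S}(T)Y$ while $\rho_1(T') = Y^T\mathcal{S}(T')Y$, so it suffices to show $Y^T(\mathcal{S}(T)-\mathcal{S}(T'))Y > 0$. This difference is supported on the few edges that are moved or whose endpoint-degrees change, and I would control the resulting terms using the monotonicity of $Y$ along pendant paths (its entries decrease from the interior toward the leaves, by the eigenvalue equation together with positivity), so that relocating a branch toward the end of a longer path pairs the moved edges with strictly smaller eigenvector products.

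The main obstacle is that $\mathcal{S}$ is degree-weighted: a branch-shift lowers $d_v$ (and hence the weights $\sqrt{d_p^2+d_v^2}$ on edges at $v$) but simultaneously creates new degree-$2$ vertices along the lengthened path, raising some weights from $\sqrt{5}$ toward $2\sqrt{2}$, so the sign of $Y^T(\mathcal{S}(T)-\mathcal{S}(T'))Y$ is not automatic as it would be for the unweighted adjacency matrix. Showing that the spreading effect dominates the weight change is precisely where the hypothesis $n \ge 9$ enters: for small $n$ the weight gain along the new spine can outweigh the benefit of spreading, so the path need not be extremal, whereas for $n \ge 9$ the length of the spine forces the Perron entries near its ends to be small enough to make the decrease hold. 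Quantifying this trade-off — bounding the Perron entries near the leaves of $P_n$ and near the relocated branch and comparing the two weighted sums — is the technical heart of the argument and the step I expect to be hardest.
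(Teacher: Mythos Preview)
The paper does not supply its own proof of this theorem: it is stated as a result of Li and Wang \cite{LW} (note the citation in the theorem header and the sentence ``Li and Wang \cite{LW} showed that the path $P_n$ is uniquely the tree on $n\geq 9$ vertices with the smallest Sombor spectral radius among all trees'' immediately preceding it). The upper bound is exactly Corollary~\ref{cor3,3}; the lower bound is simply quoted from \cite{LW}. So there is no in-paper argument for the lower bound against which to compare your proposal.

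Your treatment of the upper bound is correct and matches the paper.

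For the lower bound, your grafting outline is a natural strategy, and you have accurately diagnosed the obstruction: because the Sombor weights depend on endpoint degrees, a branch-shift that would decrease the unweighted adjacency spectral radius can simultaneously raise edge weights along the lengthened spine (e.g.\ from $\sqrt{5}$ to $2\sqrt{2}$), so the sign of $Y^T(\mathcal{S}(T)-\mathcal{S}(T'))Y$ is not automatic. You explicitly leave this quantitative comparison undone, and that comparison \emph{is} the entire substance of the result. The difficulty is real: the threshold $n\ge 9$ is sharp (for smaller $n$ the path need not minimize $\rho_1$), so any valid argument must be tight enough to detect it, which rules out soft monotonicity reasoning alone. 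There is also a structural wrinkle in your plan: you appeal to monotonicity of the Perron vector $Y$ of $T'$ along pendant paths, but after the shift the branch sits near the former leaf end, so that portion of $T'$ is no longer a pendant path and the claimed monotonicity of $Y$ there needs separate justification. As written, your proposal correctly locates the difficulty but does not resolve it; the missing step is precisely the one that carries all the work in \cite{LW}.
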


\begin{problem}
For a given class of graphs, characterize the graphs with the maximum or minimum Sombor spectral radius.
\end{problem}

\section{\large  On the Sombor energy}

\begin{theorem}\label{th4,1} 
Let $G$ be a graph with $n$ vertices. Then
$$\sqrt{2F+n(n-1)\left(|\det\mathcal{S}(G)|\right)^{2/n}}\leq \mathcal{E}(G)\leq \sqrt{2(n-1)F+n\left(|\det\mathcal{S}(G)|\right)^{2/n}}.$$
\end{theorem}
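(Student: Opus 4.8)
The plan is to invoke Lemma \ref{le2,4} (Kober's inequality) with the squared Sombor eigenvalues as the input sequence. First I would set $a_i=\rho_i^2$ for $i=1,2,\ldots,n$; after relabeling we may assume $a_1\geq a_2\geq \cdots \geq a_n\geq 0$, so the hypotheses of Lemma \ref{le2,4} are satisfied automatically, since squares of real numbers are non-negative. With this choice $\sqrt{a_i}=|\rho_i|$, and hence $\sum_{i=1}^{n}\sqrt{a_i}=\sum_{i=1}^{n}|\rho_i|=\mathcal{E}(G)$, so the middle term of Kober's inequality is exactly $\mathcal{E}(G)^2$.

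Next I would evaluate the two symmetric functions appearing in Lemma \ref{le2,4}. By Lemma \ref{le2,9}(i) we have $\sum_{i=1}^{n}a_i=\sum_{i=1}^{n}\rho_i^2=2F$. For the product, since $\det\mathcal{S}(G)=\prod_{i=1}^{n}\rho_i$, it follows that $\prod_{i=1}^{n}a_i=\prod_{i=1}^{n}\rho_i^2=\left(\det\mathcal{S}(G)\right)^2=\left(|\det\mathcal{S}(G)|\right)^2$, and therefore $\left(\prod_{i=1}^{n}a_i\right)^{1/n}=\left(|\det\mathcal{S}(G)|\right)^{2/n}$. Substituting these evaluations into the two inequalities of Lemma \ref{le2,4} yields
$$2F+n(n-1)\left(|\det\mathcal{S}(G)|\right)^{2/n}\leq \mathcal{E}(G)^2\leq 2(n-1)F+n\left(|\det\mathcal{S}(G)|\right)^{2/n},$$
and taking non-negative square roots throughout gives precisely the claimed bounds.

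This argument is essentially a direct substitution into a known inequality, so I do not anticipate a genuine obstacle. The only points requiring a moment's care are verifying that the squared eigenvalues form a legitimate non-negative input for Lemma \ref{le2,4} (immediate), correctly identifying $\sum_{i=1}^{n}\rho_i^2$ with $2F$ via Lemma \ref{le2,9}(i), and recognizing the product $\prod_{i=1}^{n}\rho_i^2$ as $\left(|\det\mathcal{S}(G)|\right)^2$ through the relation $\det\mathcal{S}(G)=\prod_{i=1}^{n}\rho_i$. Since the excerpt does not record the equality conditions of Lemma \ref{le2,4}, I would not expect the statement to include an equality characterization.
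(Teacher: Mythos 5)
Your proposal is correct and is exactly the paper's argument: the paper's proof consists of the single line ``Replacing $a_i=\rho_i^2(G)$ in Lemma \ref{le2,4}, we have the proof,'' and your write-up simply fills in the details of that substitution (identifying $\sum_i \rho_i^2=2F$ via Lemma \ref{le2,9}(i), $\prod_i\rho_i^2=(|\det\mathcal{S}(G)|)^2$, and the middle term as $\mathcal{E}(G)^2$). No discrepancies to report.
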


\begin{proof}
Replacing $a_i=\rho_i^2(G)$ in Lemma \ref{le2,4}, we have the proof. $\Box$
\end{proof}

\begin{theorem}\label{th4,2} 
Let $G$ be a graph with $n\geq 3$ vertices. Then
$$2\sqrt{F}\leq \mathcal{E}(G)\leq \sqrt{2nF}.\eqno{(4.1)}$$
The equality in the right hand side holds if and only if $G\cong \overline{K_n}$ or $G\cong \frac{n}{2}K_2$. If $G$ is a connected graph, then the equality in the left hand side holds if and only if $G\cong K_{s,\,t}$, $s+t=n$.
\end{theorem}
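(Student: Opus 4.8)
The plan is to extract both inequalities in $(4.1)$ from the two moment identities $\sum_{i=1}^n \rho_i = 0$ and $\sum_{i=1}^n \rho_i^2 = 2F$ supplied by Lemma \ref{le2,9}(i), and then to read off the equality cases from Lemma \ref{le2,8} together with a rank argument. For the upper bound I would apply the Cauchy--Schwarz inequality to the vector $(|\rho_1|,\ldots,|\rho_n|)$ against the all-ones vector,
$$\mathcal{E}(G)=\sum_{i=1}^n|\rho_i|\leq \sqrt{n}\,\Big(\sum_{i=1}^n\rho_i^2\Big)^{1/2}=\sqrt{2nF}.$$
Equality holds precisely when $|\rho_1|=|\rho_2|=\cdots=|\rho_n|$, and Lemma \ref{le2,8} identifies exactly these graphs as $\overline{K_n}$ and $\frac{n}{2}K_2$; a direct computation of the Sombor spectra of these two graphs then confirms that equality does occur.

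For the lower bound I would square the energy and split off the diagonal contribution,
$$\mathcal{E}(G)^2=\sum_{i=1}^n\rho_i^2+\sum_{i\neq j}|\rho_i|\,|\rho_j|=2F+\sum_{i\neq j}|\rho_i|\,|\rho_j|,$$
and then combine $\sum_{i\neq j}\rho_i\rho_j=\big(\sum_i\rho_i\big)^2-\sum_i\rho_i^2=-2F$ with the triangle inequality $\sum_{i\neq j}|\rho_i|\,|\rho_j|\geq\big|\sum_{i\neq j}\rho_i\rho_j\big|=2F$ to obtain $\mathcal{E}(G)^2\geq 4F$. Equality in the triangle inequality forces every product $\rho_i\rho_j$ with $i\neq j$ to have the same sign; since their sum is negative they are all non-positive, so $\mathcal{S}(G)$ has at most one positive and at most one negative eigenvalue. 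For a connected $G$ with $n\geq 3$ the matrix $\mathcal{S}(G)$ is irreducible, nonnegative, and has $F>0$, so by Perron--Frobenius there is exactly one positive eigenvalue $\rho_1=\sqrt{F}$ (with entrywise positive eigenvector) and, because the trace vanishes, exactly one negative eigenvalue $\rho_n=-\sqrt{F}$, all remaining eigenvalues being zero.

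The main obstacle is then to show that a connected graph whose Sombor matrix carries this rank-$2$ symmetric spectrum must be complete bipartite. I would argue through the spectral decomposition $\mathcal{S}(G)=\sqrt{F}\,(uu^T-vv^T)$, where $u,v$ are orthonormal eigenvectors for $\sqrt{F}$ and $-\sqrt{F}$ and $u$ is entrywise positive. The vanishing diagonal gives $u_i^2=v_i^2$ for every $i$, hence $v_i=\pm u_i$. Partitioning the vertices into $S=\{i:v_i=u_i\}$ and $T=\{i:v_i=-u_i\}$, the entry $\mathcal{S}(G)_{ij}=\sqrt{F}\,(u_iu_j-v_iv_j)$ vanishes when $i,j$ lie in the same part and is strictly positive when they lie in different parts. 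Thus $S$ and $T$ are independent sets with every $S$--$T$ pair adjacent, i.e. $G\cong K_{s,t}$ with $s=|S|$, $t=|T|$ and $s+t=n$.

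Conversely, for $K_{s,t}$ one has $\mathcal{S}(G)=\sqrt{s^2+t^2}\,A(G)$, whose only nonzero eigenvalues are $\pm\sqrt{st(s^2+t^2)}=\pm\sqrt{F}$, so $\mathcal{E}(G)=2\sqrt{F}$ and equality indeed holds. The delicate point throughout is the sign bookkeeping in the triangle-inequality step, together with the observation that the zero diagonal is exactly what converts the rank-$2$ condition into the bipartition $V=S\cup T$; everything else is a routine application of the moment identities and Cauchy--Schwarz.
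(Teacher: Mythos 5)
Your proposal is correct, and while your upper bound and the derivation of the lower inequality coincide with the paper's (Cauchy--Schwarz against the all-ones vector plus Lemma \ref{le2,8} for the one, the identity $\sum_i\rho_i^2=-2\sum_{i<j}\rho_i\rho_j$ plus the triangle inequality for the other), your treatment of the equality case $\mathcal{E}(G)=2\sqrt{F}$ is genuinely different. The paper, having reduced to the spectrum $\rho_1=-\rho_n=\sqrt{F}$, $\rho_2=\cdots=\rho_{n-1}=0$, invokes two combinatorial lemmas: Lemma \ref{le2,5} (two eigenvalues of maximal modulus force all closed walks to have even length, hence $G$ is bipartite) and Lemma \ref{le2,7} (three distinct Sombor eigenvalues force $diam(G)\leq 2$), and then observes that a bipartite graph of diameter at most $2$ is complete bipartite. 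You instead stay entirely inside linear algebra: the rank-$2$ decomposition $\mathcal{S}(G)=\sqrt{F}\,(uu^T-vv^T)$ with Perron vector $u>0$, combined with the vanishing diagonal, gives $v_i=\pm u_i$, and the sign pattern of $u_iu_j-v_iv_j$ hands you the bipartition $V=S\cup T$ together with all cross edges explicitly. Your route is self-contained (it needs neither the walk lemma nor the diameter lemma), it works verbatim for any nonnegative symmetric matrix with zero diagonal and this spectrum, and it exhibits the structure of $G$ constructively; the paper's route is shorter given that Lemmas \ref{le2,5} and \ref{le2,7} are already available in its toolkit. One small point you should make explicit: both parts $S$ and $T$ are nonempty, which follows from the orthogonality $u^Tv=\sum_i \varepsilon_i u_i^2=0$ with all $u_i^2>0$ (or, alternatively, from connectivity, since an empty part would force $\mathcal{S}(G)=0$). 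With that line added, your argument is complete.
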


\begin{proof} By the Cauchy-Schwarz inequality, we have
$$\mathcal{E}(G)=\sum\limits_{i=1}^{n}|\rho_i|\leq\sqrt{n\sum\limits_{i=1}^{n}\rho_i^2}=\sqrt{2nF}.$$
with equality if and only if $|\rho_1|=|\rho_2|=\cdots=|\rho_n|$. By Lemma \ref{le2,8}, we have $G\cong \overline{K_n}$ or $G\cong \frac{n}{2}K_2$.
Since $\sum\limits_{i=1}^{n}\rho_i^2=-2\sum\limits_{1\leq i<j\leq n}\rho_i\rho_j$, we have
$$\mathcal{E}^2(G)=\sum\limits_{i=1}^{n}\rho_i^2+2\sum\limits_{1\leq i<j\leq n}|\rho_i||\rho_j|\geq \sum\limits_{i=1}^{n}\rho_i^2+2\left\lvert \sum\limits_{1\leq i<j\leq n}\rho_i\rho_j \right\rvert=2\sum\limits_{i=1}^{n}\rho_i^2=4F$$
with equality if and only if $\rho_1=-\rho_n$, $\rho_2=\cdots=\rho_{n-1}=0$. By Lemma \ref{le2,5}, we may conclude that all closed walks in $G$ have even length, which implies that $G$ is bipartite. By Lemma \ref{le2,7}, we have $diam=2$.
Thus $G$ is a complete bipartite graph $G\cong K_{s,\,t}$, $s+t=n$. Conversely, if $G\cong K_{s,\, t}$, then $\mathcal{S}(G)=\sqrt{s^2+t^2}A(G)$. Thus $\rho_1(G)=-\rho_n(G)=\sqrt{s^3t+st^3}=\sqrt{F}$, $\rho_2(G)=\cdots=\rho_{n-1}(G)=0$. It is easy to check that equality holds in the left hand side in (4.1).
This completes the proof. $\Box$
\end{proof}

\begin{corollary}\label{cor4,1} 
Let $G$ be a complete bipartite graph with $n$ vertices. Then
$$2\sqrt{(n-1)(n^2-2n+2)}\leq \mathcal{E}(G)\leq 2\sqrt{\left\lceil\frac{n}{2}\right\rceil^3\left\lfloor\frac{n}{2}\right\rfloor
+\left\lfloor\frac{n}{2}\right\rfloor\left\lceil\frac{n}{2}\right\rceil^3}.$$
The equality in the left hand side holds if and only if $G\cong K_{1,\, n-1}$, and the equality in the right hand side holds if and only if $G\cong K_{\lceil\frac{n}{2}\rceil,\, \lfloor\frac{n}{2}\rfloor}$.
\end{corollary}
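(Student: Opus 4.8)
The plan is to reduce the corollary to an elementary one-variable optimization by exploiting the explicit spectrum already computed in the proof of Theorem \ref{th4,2}. First I would recall that for a complete bipartite graph $G\cong K_{s,\,t}$ with $s+t=n$ and $s,t\geq 1$, every edge joins a vertex of degree $t$ to a vertex of degree $s$, so $\mathcal{S}(G)=\sqrt{s^2+t^2}\,A(G)$. Since $A(K_{s,\,t})$ has nonzero eigenvalues $\pm\sqrt{st}$ and otherwise $0$, the Sombor matrix has exactly two nonzero eigenvalues $\rho_1=-\rho_n=\sqrt{st(s^2+t^2)}=\sqrt{F}$. Hence
$$\mathcal{E}(G)=2\sqrt{F}=2\sqrt{st(s^2+t^2)},$$
and the whole corollary becomes the problem of bounding $F=st(s^2+t^2)$ over the integer partitions $s+t=n$ with $s,t\geq 1$.

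Next I would linearize the objective in the product $p:=st$. Using $s^2+t^2=(s+t)^2-2st=n^2-2p$, one obtains
$$F=p(n^2-2p)=n^2p-2p^2=:g(p),$$
a downward concave parabola in $p$ with vertex at $p=n^2/4$ and $g'(p)=n^2-4p$. As $s$ ranges over $\{1,2,\ldots,n-1\}$, the product $p=s(n-s)$ takes its smallest value $n-1$ at $s\in\{1,n-1\}$ and its largest value $\lfloor n/2\rfloor\lceil n/2\rceil$ at the balanced split $s\in\{\lfloor n/2\rfloor,\lceil n/2\rceil\}$, and every attained value lies in $[\,n-1,\,\lfloor n/2\rfloor\lceil n/2\rceil\,]\subseteq[\,n-1,\,n^2/4\,]$.

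On this interval $g'(p)=n^2-4p\geq 0$, with strict inequality except at the right endpoint when $n$ is even, so $g$ is strictly increasing in $p$. Consequently $F$ is minimized precisely at the smallest admissible product $p=n-1$, that is at $G\cong K_{1,\,n-1}$, giving $F=(n-1)(n^2-2(n-1))=(n-1)(n^2-2n+2)$ and hence the left-hand bound; and $F$ is maximized precisely at the largest admissible product $p=\lfloor n/2\rfloor\lceil n/2\rceil$, that is at $G\cong K_{\lceil n/2\rceil,\,\lfloor n/2\rfloor}$, giving the right-hand bound after substituting back $s^2+t^2$. Strict monotonicity of $g$ on distinct integer products yields uniqueness of both extremal graphs, which is exactly the stated equality characterization.

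The computations here are routine; the only point that needs care is the optimization bookkeeping, namely confirming that the extreme products $n-1$ and $\lfloor n/2\rfloor\lceil n/2\rceil$ are indeed the endpoints of the attained range and that every intermediate product stays to the left of the parabola's vertex $n^2/4$. Securing this lets me read off monotonicity, and therefore both equality cases, directly from the sign of $g'(p)$ rather than invoking a separate convexity argument.
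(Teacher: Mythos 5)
Your proof is correct and follows essentially the same route as the paper: both extract $\mathcal{E}(K_{s,t})=2\sqrt{st(s^2+t^2)}=2\sqrt{F}$ from the explicit spectrum computed in the proof of Theorem \ref{th4,2}, and then settle the corollary by a one-variable monotonicity argument over the splits $s+t=n$. The only difference is your substitution $p=st$, which turns $F$ into the quadratic $g(p)=p(n^2-2p)$ and makes the monotonicity (and hence both equality characterizations) checkable by inspection, whereas the paper asserts the strict increase of $f(x)=2\sqrt{(n-x)^3x+(n-x)x^3}$ on $[1,\lfloor n/2\rfloor]$ with an unspelled-out derivative computation.
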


\begin{proof} By the proof of Theorem \ref{th4,2}, we have
$$\mathcal{E}(K_{s,\,t})= 2\sqrt{s^3t+st^3}=2\sqrt{(n-t)^3t+(n-t)t^3}, \quad 1\leq t\leq \left\lfloor\frac{n}{2}\right\rfloor.$$
Let $f(x)=2\sqrt{(n-x)^3x+(n-x)x^3}$. By derivative, we know that $f(x)$ is a strictly increasing function in the interval $[1, \lfloor\frac{n}{2}\rfloor]$. Thus
$$\mathcal{E}(K_{1,\, n-1})=f(1)\leq \mathcal{E}(K_{s,\,t})= f(t)\leq f(\lfloor\frac{n}{2}\rfloor)=\mathcal{E}(K_{\lceil\frac{n}{2}\rceil,\, \lfloor\frac{n}{2}\rfloor}).$$
This completes the proof. $\Box$
\end{proof}

\begin{theorem}\label{th4,3} 
Let $G$ be a graph with $n$ vertices. Then
$$2\rho_1\leq \mathcal{E}(G)\leq \rho_1+\sqrt{(n-1)(2F-\rho_1^2)}.$$
\end{theorem}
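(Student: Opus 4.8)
The plan is to prove both inequalities directly from the two trace identities in Lemma \ref{le2,9}, namely $\sum_{i=1}^{n}\rho_i=0$ and $\sum_{i=1}^{n}\rho_i^2=2F$, together with the elementary fact that the Sombor matrix $\mathcal{S}(G)$ is a non-negative symmetric matrix, so by the Perron--Frobenius theorem its largest eigenvalue equals its spectral radius; in particular $\rho_1\geq 0$. (If $G$ has no edges then $\mathcal{S}(G)$ is the zero matrix and all three quantities vanish, so both bounds hold trivially; henceforth I assume $\rho_1>0$.)

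For the lower bound I would split the spectrum into its positive and non-positive parts. Writing $P=\sum_{\rho_i>0}\rho_i$ and $N=\sum_{\rho_i<0}|\rho_i|$, the identity $\sum_{i=1}^{n}\rho_i=0$ forces $P=N$, and hence
$$\mathcal{E}(G)=\sum_{i=1}^{n}|\rho_i|=P+N=2P.$$
Since $\rho_1$ is one of the positive eigenvalues, $P\geq\rho_1$, which gives $\mathcal{E}(G)=2P\geq 2\rho_1$ at once.

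For the upper bound I would peel off the contribution of $\rho_1$ and apply the Cauchy--Schwarz inequality to the remaining $n-1$ eigenvalues. Concretely, using $\rho_1\geq 0$,
$$\mathcal{E}(G)=\rho_1+\sum_{i=2}^{n}|\rho_i|\leq \rho_1+\sqrt{(n-1)\sum_{i=2}^{n}\rho_i^2}.$$
By Lemma \ref{le2,9}(i) we have $\sum_{i=2}^{n}\rho_i^2=2F-\rho_1^2$, and substituting this yields exactly $\mathcal{E}(G)\leq \rho_1+\sqrt{(n-1)(2F-\rho_1^2)}$.

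There is no genuine obstacle in this argument: both steps are textbook manipulations of the kind used for the classical McClelland and Koolen--Moulton energy bounds, transplanted to the Sombor spectrum via the moment identities already recorded in Lemma \ref{le2,9}. The only points requiring care are purely bookkeeping: ensuring $\rho_1\geq 0$ so that $|\rho_1|=\rho_1$ in the upper bound, and keeping track of the index shift so that the Cauchy--Schwarz step is applied to the $n-1$ eigenvalues $\rho_2,\dots,\rho_n$ rather than to all $n$. If desired, one could also read off the equality conditions: the lower bound is tight precisely when $\rho_1$ is the only positive eigenvalue, and the upper bound is tight precisely when $|\rho_2|=\cdots=|\rho_n|$, but these are not needed for the statement as given.
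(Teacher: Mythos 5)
Your proposal is correct and takes essentially the same route as the paper: the upper bound is the identical Cauchy--Schwarz argument applied to $\rho_2,\dots,\rho_n$ together with $\sum_{i=1}^{n}\rho_i^2=2F$, and your lower bound via the positive/negative part decomposition $P=N$ is just a rephrasing of the paper's triangle-inequality step $\sum_{i=2}^{n}|\rho_i|\geq \bigl\lvert\sum_{i=2}^{n}\rho_i\bigr\rvert=\rho_1$, both resting on $\sum_{i=1}^{n}\rho_i=0$. Your added care about $\rho_1\geq 0$ and the edgeless case is harmless bookkeeping that the paper leaves implicit.
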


\begin{proof} Since $\sum\limits_{i=1}^{n}\rho_i=0$, we have
$\mathcal{E}(G)=\rho_1+\sum\limits_{i=2}^{n}|\rho_i|\geq \rho_1+|\sum\limits_{i=2}^{n}\rho_i|=2\rho_1$.
Since $\mathcal{E}(G)=\rho_1+\sum\limits_{i=2}^{n}|\rho_i|$, by the Cauchy-Schwartz inequality, we have
$$\mathcal{E}(G)\leq \rho_1+\sqrt{(n-1)(2F-\rho_1^2)}.$$
This completes the proof. $\Box$
\end{proof}

\begin{theorem}\label{th4,4} 
Let $G$ be a non-trivial graph.Then
$$\mathcal{E}(G)\geq \sqrt{\frac{tr(\mathcal{S}^2)^3}{tr(\mathcal{S}^4)}}.$$
\end{theorem}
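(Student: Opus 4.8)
The plan is to rewrite the claimed bound purely in terms of the power sums of the Sombor eigenvalues and then recognize it as an instance of a standard interpolation inequality. Recall from Lemma \ref{le2,9} that $tr(\mathcal{S}^2)=\sum_{i=1}^n \rho_i^2$ and $tr(\mathcal{S}^4)=\sum_{i=1}^n \rho_i^4$, while by definition $\mathcal{E}(G)=\sum_{i=1}^n|\rho_i|$. Setting $a_i=|\rho_i|\ge 0$, the desired inequality $\mathcal{E}(G)\ge \sqrt{tr(\mathcal{S}^2)^3/tr(\mathcal{S}^4)}$ is, after squaring and clearing the denominator, equivalent to
$$\Big(\sum_{i=1}^n a_i^2\Big)^3\le \Big(\sum_{i=1}^n a_i\Big)^2\Big(\sum_{i=1}^n a_i^4\Big).$$
So the entire problem reduces to proving this elementary inequality for nonnegative reals.

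To prove it, I would apply H\"{o}lder's inequality with the exponent pair $(3/2,\,3)$ to the factorization $a_i^2=a_i^{2/3}\cdot (a_i^4)^{1/3}$. Since $\tfrac{2}{3}+\tfrac{1}{3}=1$, this yields
$$\sum_{i=1}^n a_i^2\le\Big(\sum_{i=1}^n a_i\Big)^{2/3}\Big(\sum_{i=1}^n a_i^4\Big)^{1/3},$$
and cubing both sides gives exactly the displayed inequality. Equivalently -- and perhaps more in keeping with the elementary tools used elsewhere in the paper -- one can apply the Cauchy-Schwarz inequality twice: first $\sum a_i^2=\sum a_i^{1/2}a_i^{3/2}\le(\sum a_i)^{1/2}(\sum a_i^3)^{1/2}$, and then $\sum a_i^3=\sum a_i\,a_i^2\le(\sum a_i^2)^{1/2}(\sum a_i^4)^{1/2}$; squaring the first, substituting the second, and eliminating the common factor $\sum a_i^2$ reproduces the same bound $\big(\sum a_i^2\big)^3\le\big(\sum a_i\big)^2\big(\sum a_i^4\big)$.

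Finally I would verify that the hypothesis is used correctly: the division by $tr(\mathcal{S}^4)$ requires $tr(\mathcal{S}^4)=\sum\rho_i^4>0$, which holds precisely because $G$ is non-trivial, so $\mathcal{S}(G)$ has at least one nonzero entry, hence nonzero Frobenius norm $\sum\rho_i^2$, and therefore at least one nonzero eigenvalue. There is really no serious obstacle here; the only thing to get right is the choice of H\"{o}lder exponents (equivalently, the correct interpolation of the $\ell^2$ norm of the eigenvalue vector between its $\ell^1$ and $\ell^4$ norms), together with the observation that every $a_i=|\rho_i|$ is nonnegative so that raising to fractional powers is legitimate. Since the theorem states no equality characterization, I would not need to track the equality case, though it is attained exactly when the nonzero values among the $|\rho_i|$ are all equal.
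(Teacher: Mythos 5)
Your proof is correct and takes essentially the same route as the paper: both apply H\"{o}lder's inequality with exponents $(3/2,3)$ to the same splitting $|\rho_i|^2=|\rho_i|^{2/3}\left(|\rho_i|^4\right)^{1/3}$, obtaining $\sum_i\rho_i^2\le\left(\sum_i|\rho_i|\right)^{2/3}\left(\sum_i\rho_i^4\right)^{1/3}$ and rearranging to the stated bound. Your alternative double Cauchy--Schwarz derivation and your explicit observation that non-triviality is what guarantees $tr(\mathcal{S}^4)>0$ are worthwhile additions, but the core argument coincides with the paper's.
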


\begin{proof} Let $a_i=|\rho_i|^{\frac{2}{3}}$, $b_i=|\rho_i|^{\frac{4}{3}}$, $p=\frac{3}{2}$ and $q=3$ in the H\"{o}lder inequality
$$\sum\limits_{i=1}^{n}a_ib_i\leq \left(\sum\limits_{i=1}^{n}a_i^{p}\right)^{\frac{1}{p}} \left(\sum\limits_{i=1}^{n}b_i^{q}\right)^{\frac{1}{q}}.$$
Then
$$\sum\limits_{i=1}^{n}|\rho_i|^2=\sum\limits_{i=1}^{n}|\rho_i|^{\frac23}\left(|\rho_i|^{4}\right)^{\frac13}\leq \left(\sum\limits_{i=1}^{n}|\rho_i|\right)^{\frac{2}{3}} \left(\sum\limits_{i=1}^{n}|\rho_i|^{4}\right)^{\frac{1}{3}},$$
that is,
$$\mathcal{E}(G)\geq\left(\frac{\sum\limits_{i=1}^{n}|\rho_i|^2}{\left(\sum\limits_{i=1}^{n}|\rho_i|^{4}\right)^{\frac{1}{3}}}\right)^{\frac{3}{2}}
=\sqrt{\frac{tr(\mathcal{S}^2)^3}{tr(\mathcal{S}^4)}}.$$
The proof is completed. $\Box$
\end{proof}

For a graph $G$, we use $M_k(G)$ to denote the set of all $k$-matchings of $G$. If $e=v_iv_j\in E(G)$, then we denote
$$SO_G(e)=SO_G(v_iv_j)=\left(\sqrt{d_i^2+d_j^2} \right)^2=d_i^2+d_j^2,$$
and we say that $SO_G(e)$ is the $SO$-value of the edge $e$. If $\beta=\{e_1, e_2, \ldots, e_k\}\in M_k(G)$, we call that $\prod_{i=1}^{k}SO_G(e_i)$ is the
$SO$-value of matching $\beta$, and write $SO_G(\beta)=\prod_{i=1}^{k}SO_G(e_i)$.

If $G$ is a bipartite graph with $n$ vertices, then the characteristic polynomial of $G$ can be written as (see \cite{GP, LSG})
$$\phi_A(G,x)=|xI-A(G)|=\sum\limits_{k=0}^{\lfloor\frac{n}{2}\rfloor}(-1)^km(G,k)x^{n-2k},$$
where $m(G,0)=1$, and $m(G, k)$ equals the number of $k$-matchings of $G$ for $1\leq k \leq \lfloor\frac{n}{2}\rfloor$. The energy of $G$ can be expressed
as the Coulson integral formula (see \cite{GP, LSG})
$$E_A(G)=\frac{1}{\pi}\int_{-\infty}^{+\infty}\frac{1}{x^2}\ln\left(1+\sum\limits_{k=1}^{\lfloor\frac{n}{2}\rfloor}m(G,k)x^{2k}\right)dx.$$
Then $E_A(G)$ is a strictly monotonously increasing function of $m(G, k)$.

For a bipartite graph $G$ with $n$ vertices, the adjacency matrix $A(G)$ and Sombor matrix $\mathcal{S}(G)$ are nonnegative real symmetric matrices with zero diagonal, and$A(G)$ and $\mathcal{S}(G)$ have the same zero-nonzero pattern, that is, for any $1\leq i, j \leq n$,
$(i, j)$-entry of $A(G)$ is nonzero (or zero) if and only if $(i, j)$-entry of $\mathcal{S}(G)$ is nonzero (or zero). Thus the Sombor characteristic polynomial of $G$ can be written as
$$\phi_{SO}(G,x)=|xI-\mathcal{S}(G)|=\sum\limits_{k=0}^{\lfloor\frac{n}{2}\rfloor}(-1)^kb(\mathcal{S}(G),k)x^{n-2k},$$
where $b(\mathcal{S}(G),0)=1$, and $b(\mathcal{S}(G), k)$ equals the sum of $SO$-values of all $k$-matchings of $G$ for $1\leq k \leq \lfloor\frac{n}{2}\rfloor$. Similarly, the Coulson integral formula for Sombor energy of a bipartite graph $G$ can be expressed as follows
$$\mathcal{E}(G)=\frac{1}{\pi}\int_{-\infty}^{+\infty}\frac{1}{x^2}\ln\left(1+\sum\limits_{k=1}^{\lfloor\frac{n}{2}\rfloor}
b(\mathcal{S}(G),k)x^{2k}\right)dx.\eqno{(4.2)}$$
It is easy to see that $\mathcal{E}(G)$ is a strictly monotonously increasing function of $b(\mathcal{S}(G),k)$. So the following two results are direct.

\begin{proposition}\label{pro4,1} 
Let $G_1$ and $G_2$ be two bipartite graphs with $n$ vertices, and their Sombor characteristic polynomials be
$$\phi_{SO}(G_1,x)=\sum\limits_{k=0}^{\lfloor\frac{n}{2}\rfloor}(-1)^kb(\mathcal{S}(G_1),k)x^{n-2k},\quad \phi_{SO}(G_2,x)=\sum\limits_{k=0}^{\lfloor\frac{n}{2}\rfloor}(-1)^kb(\mathcal{S}(G_2),k)x^{n-2k},$$
respectively. If $b(\mathcal{S}(G_1),k)\geq b(\mathcal{S}(G_2),k)$ for all $k\geq 0$, and there is a positive integer $k$ such that $b(\mathcal{S}(G_1),k)> b(\mathcal{S}(G_2),k)$, then $\mathcal{S}(G_1)>\mathcal{S}(G_2)$.
\end{proposition}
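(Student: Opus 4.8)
The plan is to read the result straight off the Coulson integral formula (4.2), exploiting the fact that its integrand is a pointwise monotone function of the matching-type coefficients $b(\mathcal{S}(G),k)$. First I would record that every coefficient $b(\mathcal{S}(G),k)$ is non-negative: by definition it is the sum of $SO$-values of the $k$-matchings of $G$, and each such value is a product of the strictly positive numbers $d_i^2+d_j^2$. Hence for every real $x$ the quantity $1+\sum_{k=1}^{\lfloor\frac{n}{2}\rfloor}b(\mathcal{S}(G),k)x^{2k}$ is at least $1$, so the logarithm in (4.2) is well defined and the formula is legitimate for both $G_1$ and $G_2$.

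Next I would compare the two integrands pointwise. Since $x^{2k}\geq 0$ for every real $x$ and every $k\geq 1$, the hypothesis $b(\mathcal{S}(G_1),k)\geq b(\mathcal{S}(G_2),k)$ gives
$$1+\sum\limits_{k=1}^{\lfloor\frac{n}{2}\rfloor}b(\mathcal{S}(G_1),k)x^{2k}\ \geq\ 1+\sum\limits_{k=1}^{\lfloor\frac{n}{2}\rfloor}b(\mathcal{S}(G_2),k)x^{2k}$$
for all $x$. Because $\ln$ is increasing and $x^{-2}>0$, the integrand attached to $G_1$ dominates that attached to $G_2$ at every point, and integrating over $(-\infty,+\infty)$ preserves the inequality, yielding $\mathcal{E}(G_1)\geq \mathcal{E}(G_2)$. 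This is precisely the strict monotonicity of $\mathcal{E}$ in the $b(\mathcal{S}(G),k)$ that is asserted just before the statement, so the weak inequality is essentially immediate.

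Finally, to obtain strictness I would invoke the index $k$ for which $b(\mathcal{S}(G_1),k)>b(\mathcal{S}(G_2),k)$. For every $x\neq 0$ we have $x^{2k}>0$, so the displayed inequality becomes strict there; by strict monotonicity of $\ln$, the $G_1$-integrand then exceeds the $G_2$-integrand at every nonzero $x$, that is, on a set of full measure. Integrating the (continuous, non-negative) difference of the two integrands therefore produces a strictly positive number, whence $\mathcal{E}(G_1)>\mathcal{E}(G_2)$, which is the intended conclusion.

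The only genuine subtlety — and the step I would handle most carefully — is this upgrade from $\geq$ to $>$: I must ensure that the pointwise strict gap between the integrands, valid for all $x\neq 0$, truly forces a strict inequality between the two convergent improper integrals rather than mere equality. This holds because the difference of integrands is continuous and strictly positive away from $x=0$, hence has strictly positive integral, while the integrability that makes each side of (4.2) finite is guaranteed by the validity of the Coulson formula itself. Everything else is routine.
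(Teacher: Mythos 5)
Your proof is correct and takes essentially the same route as the paper: the paper gives no separate argument for Proposition \ref{pro4,1}, deriving it ``directly'' from the strict monotonicity of the Coulson integral formula (4.2) in the coefficients $b(\mathcal{S}(G),k)$, which is precisely the pointwise-comparison argument you spell out, including the careful upgrade from $\geq$ to $>$ via strict positivity of the difference of integrands for $x\neq 0$. You also correctly read the conclusion $\mathcal{S}(G_1)>\mathcal{S}(G_2)$ as a typo for the intended $\mathcal{E}(G_1)>\mathcal{E}(G_2)$.
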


\begin{proposition}\label{pro4,2} 
Let $G$ be a bipartite of with $n$ vertices, and both $B_1$ and $B_2$ be two $n\times n$ nonnegative real symmetric matrices having the same zero-nonzero pattern with $\mathcal{S}(G)$. If $B_1>\mathcal{S}(G)>B_2$, then
$$E_{B_1}>\mathcal{E}(G)>E_{B_2},$$
where $E_{B_1}$ is the sum of the absolute values of all eigenvalues of $B_1$.
\end{proposition}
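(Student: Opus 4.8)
The plan is to observe that all three matrices in sight are positively weighted adjacency matrices of one and the same bipartite graph $G$, and then to run the Coulson-integral argument behind (4.2) three times in parallel. Concretely, since $B_1$, $\mathcal{S}(G)$ and $B_2$ are nonnegative symmetric matrices sharing the zero-nonzero pattern of $\mathcal{S}(G)$, each assigns a positive number to every edge of $G$ and $0$ to every non-edge; writing $w_\ell(e)=(B_\ell)_{ij}$ for $e=v_iv_j\in E(G)$, the matrix $B_\ell$ is exactly the weighted adjacency matrix of the bipartite graph $G$ with weights $w_\ell$.

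I would carry this out in three steps. First I would note that the derivation leading to (4.2) used only that $\mathcal{S}(G)$ is nonnegative symmetric with support the bipartite graph $G$; it never used the particular values $\sqrt{d_i^2+d_j^2}$. Hence the identical derivation gives, for $\ell\in\{1,2\}$, a characteristic polynomial $|xI-B_\ell|=\sum_{k=0}^{\lfloor n/2\rfloor}(-1)^k b(B_\ell,k)\,x^{n-2k}$ with $b(B_\ell,k)=\sum_{\beta\in M_k(G)}\prod_{e\in\beta}w_\ell(e)^2$, together with the Coulson integral $E_{B_\ell}=\frac{1}{\pi}\int_{-\infty}^{+\infty}\frac{1}{x^2}\ln\!\big(1+\sum_{k\geq 1}b(B_\ell,k)\,x^{2k}\big)\,dx$, which is strictly increasing in each coefficient. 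Second I would compare coefficients: the hypothesis $B_1>\mathcal{S}(G)>B_2$ means $w_1(e)>\sqrt{d_i^2+d_j^2}>w_2(e)>0$ for every edge $e=v_iv_j$, so $w_1(e)^2>SO_G(e)>w_2(e)^2$; multiplying over any matching $\beta$ and summing over $M_k(G)$ gives $b(B_1,k)\geq b(\mathcal{S}(G),k)\geq b(B_2,k)$ for all $k$, and since $G$ is non-trivial the set $M_1(G)$ is nonempty, so the $k=1$ inequalities are strict. Third I would feed these inequalities into the monotonicity of the Coulson integral: a strict gain at $k=1$ makes the integrand strictly larger off $x=0$, hence strictly increases the integral, yielding $E_{B_1}>\mathcal{E}(G)>E_{B_2}$. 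This mirrors the reasoning behind Proposition \ref{pro4,1}, the only difference being that here the three matrices share one pattern rather than belonging to two distinct graphs.

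The step I expect to be the main obstacle is the first one, namely rigorously transporting the matching expansion of the coefficients and the Coulson formula (4.2) from $\mathcal{S}(G)$ to the arbitrary same-pattern matrices $B_1,B_2$. The point to make carefully is that it is the bipartiteness of $G$ — which forces the spectrum of \emph{any} weighted adjacency matrix of $G$ to be symmetric about $0$, and hence the characteristic polynomial to contain only the even powers $x^{n-2k}$ — together with nonnegativity that the derivation of (4.2) actually relied on; the numerical entries played no role beyond being positive on edges. Once this transport is justified, the coefficient comparison and the final monotonicity conclusion are immediate, with the strictness supplied entirely by the edge ($k=1$) term.
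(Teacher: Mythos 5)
Your plan is exactly the paper's plan: the paper offers no argument for this proposition beyond the sentence preceding it (``It is easy to see that $\mathcal{E}(G)$ is a strictly monotonously increasing function of $b(\mathcal{S}(G),k)$. So the following two results are direct.''), and your three steps are a careful unpacking of that reasoning. Unfortunately, the step you yourself flag as the main obstacle is a genuine gap --- inherited from the paper's own unproved assertion. The expansion you need,
$$|xI-B|=\sum_{k=0}^{\lfloor n/2\rfloor}(-1)^k b(B,k)\,x^{n-2k},\qquad b(B,k)=\sum_{\beta\in M_k(G)}\prod_{e\in\beta}w(e)^2,$$
is valid only when $G$ is a forest. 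For a bipartite graph containing cycles, the coefficient (Sachs) theorem for weighted graphs produces, besides the matching terms, contributions from Sachs subgraphs containing even cycles, and these enter with the \emph{opposite} sign: for $C_4$ with edge weights $w_{12},w_{23},w_{34},w_{41}$ the constant coefficient is $(w_{12}w_{34}-w_{23}w_{41})^2$, not the matching sum $w_{12}^2w_{34}^2+w_{23}^2w_{41}^2$. Bipartiteness does give what you say it gives --- only even powers occur, and the coefficients alternate in sign, because $b(B,k)$ is the $k$-th elementary symmetric function of the squares of the positive eigenvalues --- but it does not remove the cycle terms. Since those terms are subtracted, $B_1>\mathcal{S}(G)>B_2$ on the edges does \emph{not} imply $b(B_1,k)\geq b(\mathcal{S}(G),k)\geq b(B_2,k)$: your second step compares matching sums, which for cyclic $G$ are simply not the coefficients of the characteristic polynomials, so the (correct) monotonicity of the Coulson integral has nothing to act on.

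The gap is not repairable, because the proposition itself is false for bipartite graphs with cycles. Take $G=C_4$, so $\mathcal{S}(G)=2\sqrt{2}\,A(C_4)$ and $\mathcal{E}(G)=8\sqrt{2}\approx 11.31$. Let $B_2$ have the same zero-nonzero pattern, with weight $a=2\sqrt{2}-\varepsilon$ on three edges and weight $\varepsilon$ on the fourth. Writing $B_2=\left(\begin{smallmatrix}0 & C\\ C^{T} & 0\end{smallmatrix}\right)$ with respect to the bipartition, its eigenvalues are $\pm$ the singular values of $C$, and from $(\sigma_1+\sigma_2)^2=\mathrm{tr}(C^{T}C)+2|\det C|$ one gets $E_{B_2}=2\sqrt{4a^2+(a-\varepsilon)^2}\to 4\sqrt{10}\approx 12.65$ as $\varepsilon\to 0^{+}$. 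Hence for small $\varepsilon>0$ (e.g.\ $\varepsilon=0.1$) we have $B_2<\mathcal{S}(G)$ entrywise on the edges yet $E_{B_2}>\mathcal{E}(G)$, contradicting the claimed inequality. This is just the weighted version of the classical fact $E_A(P_4)=2\sqrt{5}>4=E_A(C_4)$, and the mechanism is exactly the missing cycle term: $b(\mathcal{S}(C_4),2)=0$, while lowering one weight raises it to $a^2(a-\varepsilon)^2>0$. Your argument --- and the proposition --- are sound precisely in the case where your expansion is valid, namely when $G$ is a forest, which is the setting of the paper's later application to trees (Theorem \ref{th4,7}).
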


\begin{theorem}\label{th4,5} 
Let $G$ be a bipartite graph with $n$ vertices. Then
$$\mathcal{E}(G-e)< \mathcal{E}(G).$$
\end{theorem}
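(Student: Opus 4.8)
The plan is to compare the coefficients of the Sombor characteristic polynomials of $G$ and $G-e$ and then invoke the monotonicity of the Coulson integral formula (4.2), packaged as Proposition \ref{pro4,1}. First I would observe that $G-e$ is a subgraph of the bipartite graph $G$ on the same vertex set, hence bipartite, so that formula (4.2) and Proposition \ref{pro4,1} are available for both $G$ and $G-e$. Write $e=uv$. The essential point, and the feature distinguishing this from the adjacency-energy case, is that deleting $e$ lowers the degrees of $u$ and $v$ each by one, so the $SO$-values of edges also change. For an edge $f\in E(G-e)$: if $f$ is incident to $u$ or to $v$, then one of its endpoints has strictly smaller degree in $G-e$ than in $G$, whence $SO_{G-e}(f)<SO_G(f)$; if $f$ is incident to neither $u$ nor $v$, then $SO_{G-e}(f)=SO_G(f)$. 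In every case $SO_{G-e}(f)\leq SO_G(f)$.

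Next I would compare the matching sums $b(\mathcal{S}(\cdot),k)$. Since $E(G-e)=E(G)\setminus\{e\}$, every $k$-matching of $G-e$ is a $k$-matching of $G$, so $M_k(G-e)\subseteq M_k(G)$. For $\beta\in M_k(G-e)$, multiplying the edgewise inequalities over $f\in\beta$ gives $SO_{G-e}(\beta)=\prod_{f\in\beta}SO_{G-e}(f)\leq\prod_{f\in\beta}SO_G(f)=SO_G(\beta)$. Summing these,
$$b(\mathcal{S}(G-e),k)=\sum_{\beta\in M_k(G-e)}SO_{G-e}(\beta)\leq\sum_{\beta\in M_k(G-e)}SO_G(\beta)\leq\sum_{\beta\in M_k(G)}SO_G(\beta)=b(\mathcal{S}(G),k)$$
for every $k\geq 0$. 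For $k=1$ this is strict: the matching $\{e\}$ contributes the positive weight $SO_G(e)=d_u^2+d_v^2>0$ to $b(\mathcal{S}(G),1)$ but is absent from $b(\mathcal{S}(G-e),1)$, so $b(\mathcal{S}(G),1)-b(\mathcal{S}(G-e),1)\geq SO_G(e)>0$.

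Finally, having $b(\mathcal{S}(G),k)\geq b(\mathcal{S}(G-e),k)$ for all $k$ with strict inequality at $k=1$, Proposition \ref{pro4,1} applied with $G_1=G$ and $G_2=G-e$ yields $\mathcal{E}(G)>\mathcal{E}(G-e)$, as claimed. I expect the only genuinely delicate point to be the degree bookkeeping in the first step: one must check that the degree drop at $u$ and $v$ pushes every affected $SO$-value \emph{downward} rather than upward, so that the edgewise comparison $SO_{G-e}(f)\leq SO_G(f)$ and the subset relation $M_k(G-e)\subseteq M_k(G)$ cooperate in the same direction. Once this is secured, the conclusion is immediate from the already-established monotonicity of the Sombor energy in the coefficients $b(\mathcal{S}(\cdot),k)$.
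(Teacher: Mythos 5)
Your proof is correct, and it takes a recognizably different route from the paper's, in a way that is worth recording. The paper disposes of this theorem in one line: it asserts the entrywise comparison $\mathcal{S}(G-e)<\mathcal{S}(G)$ and invokes Proposition \ref{pro4,2}. You instead verify the hypotheses of Proposition \ref{pro4,1}: degrees can only drop when $e=uv$ is deleted, so $SO_{G-e}(f)\leq SO_G(f)$ for every surviving edge $f$; every $k$-matching of $G-e$ is a $k$-matching of $G$; hence $b(\mathcal{S}(G-e),k)\leq b(\mathcal{S}(G),k)$ for all $k$, with strictness at $k=1$ supplied by the matching $\{e\}$ of positive weight $d_u^2+d_v^2$. (You correctly read the conclusion of Proposition \ref{pro4,1}, misprinted as $\mathcal{S}(G_1)>\mathcal{S}(G_2)$, as the intended $\mathcal{E}(G_1)>\mathcal{E}(G_2)$.) Both arguments rest on the same engine, the monotonicity of the Coulson integral (4.2) in the coefficients $b(\mathcal{S}(\cdot),k)$; the difference is which packaged consequence is cited, and your choice is actually the sounder one. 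Proposition \ref{pro4,2} requires $B_2$ to have \emph{the same zero-nonzero pattern} as $\mathcal{S}(G)$, and $\mathcal{S}(G-e)$ violates this: its $(u,v)$-entry is zero while the corresponding entry of $\mathcal{S}(G)$ is $\sqrt{d_u^2+d_v^2}>0$ (moreover, entries for edges not incident to $u$ or $v$ are unchanged, so strict entrywise domination fails as well). So the paper's one-liner technically misapplies its own proposition, whereas your coefficient bookkeeping is exactly what Proposition \ref{pro4,1} asks for. One caveat that your proof shares with the paper's, being a defect of the framework rather than of your argument: the expansion of $\phi_{SO}(G,x)$ with $b(\mathcal{S}(G),k)$ equal to the sum of $SO$-values of $k$-matchings is literally valid only when $G$ is a forest; for bipartite graphs containing cycles, Sachs-type cycle terms also contribute to the coefficients (already for $C_4$ the matching formula gives the wrong constant term), so formula (4.2) and Propositions \ref{pro4,1}--\ref{pro4,2} would need repair before either proof is fully rigorous for general bipartite $G$.
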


\begin{proof}
Since $\mathcal{S}(G-e)<\mathcal{S}(G)$ , by Proposition \ref{pro4,2}, we have the proof. $\Box$
\end{proof}

The inverse sum indeg index, introduced by Vuki\v{c}evi\'{c} and Ga\v{s}perov \cite{VG}, is defined as
$$ISI(G)=\sum\limits_{v_iv_j\in E(G)}\frac{d_id_j}{d_i+d_j}.$$
Xu et al. \cite{XLYZ} obtained lower and upper bounds on the inverse sum indeg energy $E_{ISI}(G)$ of a graph $G$, and characterized the respective extremal graphs.

\begin{theorem}\label{th4,6} 
Let $G$ be a bipartite graph with $n$ vertices. Then
$$\mathcal{E}(G)\geq 2\sqrt{2}E_{ISI}(G)$$
with equality if and only if $G$ is regular.
\end{theorem}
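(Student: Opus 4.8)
The plan is to compare the Sombor matrix $\mathcal{S}(G)$ entrywise with a scalar multiple of the inverse sum indeg matrix $C(G)$, whose $(i,j)$-entry equals $\frac{d_id_j}{d_i+d_j}$ when $v_iv_j\in E(G)$ and is zero otherwise, and then to transfer the entrywise domination to the energies by means of the bipartite Coulson integral formula (4.2). Since scaling a matrix by $2\sqrt{2}$ scales every eigenvalue, hence the energy, by $2\sqrt{2}$, the matrix $2\sqrt{2}\,C(G)$ has energy $2\sqrt{2}\,E_{ISI}(G)$; thus it suffices to show that the energy of $\mathcal{S}(G)$ is at least the energy of $2\sqrt{2}\,C(G)$.

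First I would establish the pointwise inequality
$$\sqrt{d_i^2+d_j^2}\ \geq\ 2\sqrt{2}\,\frac{d_id_j}{d_i+d_j}$$
for every edge $v_iv_j$, with equality if and only if $d_i=d_j$. Squaring and clearing denominators, this is equivalent to $(d_i^2+d_j^2)(d_i+d_j)^2\geq 8d_i^2d_j^2$. Writing $t=d_i/d_j$ and dividing through by $d_j^4$, the claim becomes $t^4+2t^3-6t^2+2t+1\geq 0$; dividing by $t^2$ and setting $u=t+t^{-1}\geq 2$ turns this into $(u-2)(u+4)\geq 0$, which is clearly true and is an equality exactly when $u=2$, i.e.\ $t=1$. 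This step is the analytic core of the argument and the one place where a genuine inequality must be verified; everything after it is formal.

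Next I would invoke the structure of bipartite weighted characteristic polynomials recalled before Proposition \ref{pro4,1}. Both $\mathcal{S}(G)$ and $2\sqrt{2}\,C(G)$ are nonnegative symmetric matrices with zero diagonal sharing the zero--nonzero pattern of the bipartite graph $G$, so each has a characteristic polynomial of the form $\sum_k(-1)^k b(\,\cdot\,,k)x^{n-2k}$ in which $b(\,\cdot\,,k)$ is the sum, over all $k$-matchings $\beta$, of $\prod_{e=v_iv_j\in\beta}(\text{entry on }e)^2$. By the pointwise inequality the squared Sombor entry on each edge dominates the corresponding squared entry of $2\sqrt{2}\,C(G)$, namely $d_i^2+d_j^2\geq 8\bigl(\tfrac{d_id_j}{d_i+d_j}\bigr)^2$, so term by term $b(\mathcal{S}(G),k)\geq b(2\sqrt{2}\,C(G),k)$ for every $k\geq 0$. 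Since by (4.2) the energy is a strictly monotone increasing function of these coefficients, I obtain $\mathcal{E}(G)\geq 2\sqrt{2}\,E_{ISI}(G)$.

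Finally, for the equality case I would note that equality of the two energies forces equality of all the coefficients, in particular at $k=1$, where $b(\mathcal{S}(G),1)=\sum_{v_iv_j\in E(G)}(d_i^2+d_j^2)$ and $b(2\sqrt{2}\,C(G),1)=\sum_{v_iv_j\in E(G)}8\bigl(\tfrac{d_id_j}{d_i+d_j}\bigr)^2$; since the summands were compared termwise, equality here demands $d_i=d_j$ on every edge, i.e.\ $G$ is regular. Conversely, when $G$ is regular the pointwise inequality is an equality on each edge, so $\mathcal{S}(G)=2\sqrt{2}\,C(G)$ and the energies coincide. I expect no real obstacle beyond the clean verification of the pointwise inequality, the only subtlety being to record the $k=1$ coefficient comparison carefully so that the ``only if'' direction is airtight.
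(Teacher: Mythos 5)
Your proposal is, in substance, the paper's own proof with the black boxes opened. The paper's proof of Theorem~\ref{th4,6} is one line: the pointwise inequality $\sqrt{d_i^2+d_j^2}\geq \frac{2\sqrt{2}\,d_id_j}{d_i+d_j}$ (stated without verification) followed by an appeal to Proposition~\ref{pro4,2}; your matching-coefficient argument and use of the Coulson integral (4.2) is exactly the justification the paper sketches for Propositions~\ref{pro4,1} and~\ref{pro4,2}. Your verification of the pointwise inequality, reducing it to $(u-2)(u+4)\geq 0$ with $u=t+t^{-1}\geq 2$, is correct and is a genuine improvement in rigor over the paper, which omits it.

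However, your write-up inherits (and makes explicit) a genuine gap that is also present in the paper. The claim that for a bipartite graph the coefficient $b(\,\cdot\,,k)$ of the characteristic polynomial equals the sum over $k$-matchings of the products of squared edge entries is false as soon as $G$ contains a cycle: by the weighted Sachs coefficient theorem, even cycles also contribute to these coefficients, with the opposite sign. Already for the unweighted cycle $C_4$ the constant term of $\phi_A(C_4,x)$ is $\det A(C_4)=0$, not the $2$-matching count $2$. As a consequence, entrywise domination of edge weights does not imply domination of the coefficients $b(\,\cdot\,,k)$, and the transfer principle you rely on (Proposition~\ref{pro4,2}) is in fact false in general: assigning the four edges of $C_4$ the weights $1,1,1,1-\varepsilon$ yields a matrix entrywise dominated by $A(C_4)$ whose energy is $2\sqrt{4+\varepsilon^2}>4=E_A(C_4)$, since its two positive eigenvalues $\sigma_1,\sigma_2$ satisfy $\sigma_1^2+\sigma_2^2=4-2\varepsilon+\varepsilon^2$ and $\sigma_1\sigma_2=\varepsilon$. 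So the step ``pointwise inequality on the entries $\Rightarrow$ inequality of energies'' is unjustified for bipartite graphs containing cycles; your argument (like the paper's) is complete only when $G$ is a forest, where the Sachs subgraphs really are just the matchings and the coefficient formula you use is valid. This is not a removable corner case here: the equality statement ``$G$ regular'' concerns precisely bipartite graphs with cycles (any regular bipartite graph with degree at least $2$ contains one), and also, strictly speaking, $d_i=d_j$ on every edge only forces each component of $G$ to be regular, not $G$ itself.
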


\begin{proof}
Since $\sqrt{d_i^2+d_j^2}\geq \frac{2\sqrt{2}d_id_j}{d_i+d_j}$ with equality if and only if $G$ is regular, by Proposition \ref{pro4,2}, we have the proof. $\Box$
\end{proof}

\begin{theorem}\label{th4,7} 
Let $T_n$ be a tree with $n$ vertices and maximum degree $\Delta$. Then
$$2\sqrt{2n-2}<\mathcal{E}(T_n)<\begin{dcases}
2\sqrt{2}\Delta\csc\frac{\pi}{2(n+1)}-2\sqrt{2}\Delta, & \text{if}\,\, n \,\,\text{is} \,\,\text{even},\\
2\sqrt{2}\Delta\cot\frac{\pi}{2(n+1)}-2\sqrt{2}\Delta, & \text{if}\,\, n\,\,\text{is} \,\,\text{odd}.
\end{dcases}$$
\end{theorem}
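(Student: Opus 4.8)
The plan is to sandwich the Sombor matrix $\mathcal{S}(T_n)$ between two scalar multiples of the adjacency matrix $A(T_n)$ and then invoke Proposition \ref{pro4,2} together with the adjacency-energy bounds of Lemma \ref{le2,6}. A tree is bipartite, so Proposition \ref{pro4,2} applies as soon as I exhibit two nonnegative symmetric matrices having the same zero-nonzero pattern as $\mathcal{S}(T_n)$ that strictly dominate it from above and below. Throughout I assume $n\geq 3$, consistent with Lemma \ref{le2,6}.

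First I would record the elementary entrywise estimate: for every edge $v_iv_j$ of $T_n$ the degrees satisfy $1\leq d_i,d_j\leq \Delta$, hence $\sqrt{2}\leq \sqrt{d_i^2+d_j^2}\leq \sqrt{2}\,\Delta$. Consequently both $\sqrt{2}\,A(T_n)$ and $\sqrt{2}\,\Delta\,A(T_n)$ share the zero-nonzero pattern of $\mathcal{S}(T_n)$ and bound it entrywise. For the \emph{strict} domination required by Proposition \ref{pro4,2} I would use the structure of a tree on $n\geq 3$ vertices: it contains a vertex of degree at least $2$, so some edge satisfies $\sqrt{d_i^2+d_j^2}\geq \sqrt{5}>\sqrt{2}$, which yields $\mathcal{S}(T_n)>\sqrt{2}\,A(T_n)$; and it contains a leaf, so the incident edge satisfies $\sqrt{d_i^2+d_j^2}=\sqrt{d_i^2+1}\leq \sqrt{\Delta^2+1}<\sqrt{2}\,\Delta$ (using $\Delta\geq 2$), which yields $\sqrt{2}\,\Delta\,A(T_n)>\mathcal{S}(T_n)$.

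Applying Proposition \ref{pro4,2} with $B_2=\sqrt{2}\,A(T_n)$ and $B_1=\sqrt{2}\,\Delta\,A(T_n)$, and noting that multiplying $A(T_n)$ by a positive constant $c$ scales every eigenvalue, and hence the energy, by $c$, I obtain
$$\sqrt{2}\,E_A(T_n) < \mathcal{E}(T_n) < \sqrt{2}\,\Delta\,E_A(T_n).$$
Finally I would feed in Lemma \ref{le2,6}. The lower bound $E_A(T_n)\geq E_A(K_{1,\,n-1})=2\sqrt{n-1}$ converts the left inequality into
$$\mathcal{E}(T_n) > \sqrt{2}\cdot 2\sqrt{n-1} = 2\sqrt{2n-2},$$
while the upper bound $E_A(T_n)\leq E_A(P_n)$ converts the right inequality into $\mathcal{E}(T_n)<\sqrt{2}\,\Delta\,E_A(P_n)$, which, after distributing $\sqrt{2}\,\Delta$ over the cosecant/cotangent expressions for $E_A(P_n)$, is precisely the stated bound in both the even and odd cases.

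I do not expect a genuine obstacle: the argument is a direct monotonicity comparison piggy-backing on the adjacency theory. The only point demanding care is verifying strict (rather than merely weak entrywise) domination so that Proposition \ref{pro4,2} delivers strict inequalities; this is exactly why the simultaneous presence of a leaf and of a degree-$\geq 2$ vertex — guaranteed for every tree with $n\geq 3$ — must be invoked explicitly.
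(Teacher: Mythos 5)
Your proposal is correct and takes essentially the same route as the paper: the paper's one-line proof likewise sandwiches $\mathcal{S}(T_n)$ entrywise between $\sqrt{2}\,A(T_n)$ and $\sqrt{2}\,\Delta\,A(T_n)$, applies Proposition \ref{pro4,2} to get $\sqrt{2}\,E_A(T_n)<\mathcal{E}(T_n)<\sqrt{2}\,\Delta\,E_A(T_n)$, and concludes via Lemma \ref{le2,6}. Your extra care about where the domination is strict (a degree-$\geq 2$ vertex for the lower bound, a leaf edge for the upper, both available once $n\geq 3$) only makes explicit a point the paper glosses over.
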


\begin{proof}
Since $\sqrt{2}<\sqrt{d_i^2+d_j^2}\leq \sqrt{2}\Delta$ for a tree, by Proposition \ref{pro4,2}, we have $\sqrt{2}E_A(T_n)<\mathcal{E}(T_n)<\sqrt{2}\Delta E_A(T_n)$. By Lemma \ref{le2,6}, we have the proof. $\Box$
\end{proof}

\begin{problem}
For a given class of graphs, characterize the graphs with the maximum or minimum Sombor energy.
\end{problem}

\section{\large On the Sombor Estrada index}

\begin{theorem}\label{th5,1} 
Let $G$ be a graph with $n$ vertices and $m$ edges. Then
$$EE(G)\leq n-1+\frac{tr(\mathcal{S}^3)}{6}+\frac{tr(\mathcal{S}^4)}{24}+e^{\sqrt{2F}}-\sqrt{2F}
-\frac{1}{3}F\sqrt{2F}-\frac{1}{6}F^2.$$
Equality holds if and only if $G$ is an empty graph.
\end{theorem}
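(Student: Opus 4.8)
The plan is to expand the Sombor Estrada index as a power series in the spectral moments and then bound the high-order tail by the tail of a single exponential. Writing $M_k=\sum_{i=1}^{n}\rho_i^k=tr(\mathcal{S}^k)$, the defining series of the matrix exponential gives
\[
EE(G)=\sum_{i=1}^{n}e^{\rho_i}=\sum_{k=0}^{\infty}\frac{1}{k!}\sum_{i=1}^{n}\rho_i^k=\sum_{k=0}^{\infty}\frac{M_k}{k!},
\]
the interchange of the two summations being justified by absolute convergence. By Lemma~\ref{le2,9} we have $M_0=n$, $M_1=0$ and $M_2=tr(\mathcal{S}^2)=2F$, while $M_3=tr(\mathcal{S}^3)$ and $M_4=tr(\mathcal{S}^4)$ are kept as they stand. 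Thus the first five terms contribute exactly $n+F+\tfrac{1}{6}tr(\mathcal{S}^3)+\tfrac{1}{24}tr(\mathcal{S}^4)$, and it remains to control $\sum_{k\ge 5}M_k/k!$.

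Next I would establish the moment bound $M_k\le(2F)^{k/2}$ for every $k\ge 5$. Indeed, $M_k=\sum_i\rho_i^k\le\sum_i|\rho_i|^k=\sum_i(\rho_i^2)^{k/2}\le\bigl(\sum_i\rho_i^2\bigr)^{k/2}=(2F)^{k/2}$, where the last step is the inequality $\|a\|_p\le\|a\|_1$ (with exponent $p=k/2\ge 1$) applied to the nonnegative vector $a=(\rho_1^2,\dots,\rho_n^2)$. Setting $s=\sqrt{2F}$, this yields
\[
\sum_{k\ge 5}\frac{M_k}{k!}\le\sum_{k\ge 5}\frac{s^k}{k!}=e^{s}-\sum_{k=0}^{4}\frac{s^k}{k!}=e^{\sqrt{2F}}-1-\sqrt{2F}-F-\tfrac{1}{3}F\sqrt{2F}-\tfrac{1}{6}F^2,
\]
after substituting $s^2=2F$, $s^3=2F\sqrt{2F}$ and $s^4=4F^2$ into the first five terms of the exponential series.

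Adding the two contributions, the explicit $+F$ coming from the $k=2$ moment cancels the $-F$ produced by the $k=2$ term of the exponential tail, and collecting the rest reproduces precisely the claimed right-hand side. For the equality analysis, if $G$ is empty then $F=0$ and every $\rho_i=0$, so $EE(G)=n$, while the right-hand side collapses to $n-1+e^0=n$; hence equality holds. Conversely, equality forces $\sum_{k\ge 5}M_k/k!=\sum_{k\ge 5}s^k/k!$, and since $M_k\le s^k$ termwise with each coefficient $1/k!>0$, this requires $M_k=s^k$ for every $k\ge 5$. Taking the even index $k=6$ gives $\sum_i(\rho_i^2)^3=\bigl(\sum_i\rho_i^2\bigr)^3$, and equality in $\|a\|_3\le\|a\|_1$ means at most one $\rho_i^2$ is nonzero; combined with $\sum_i\rho_i=0$ this forces every $\rho_i=0$, so $\mathcal{S}(G)=0$ and $G$ has no edges.

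The computation is essentially routine once the series expansion is in place; the only points demanding care are the justification of the moment inequality $M_k\le(2F)^{k/2}$ for odd as well as even $k$ (handled uniformly by first passing to $|\rho_i|$), and pinning down the equality case. The crux of the latter is to test an \emph{even} moment (here $k=6$), so that the termwise equality condition in $\|a\|_p\le\|a\|_1$ applies cleanly and, together with the trace-zero relation $\sum_i\rho_i=0$, excludes any nonzero Sombor eigenvalue.
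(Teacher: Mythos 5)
Your proof is correct and follows essentially the same route as the paper's: keep the moments $M_0,\dots,M_4$ exactly, bound the tail terms by $M_k\le\bigl(\sum_i\rho_i^2\bigr)^{k/2}=(2F)^{k/2}$ for $k\ge 5$, and re-sum the tail into $e^{\sqrt{2F}}$ minus its first five terms. One genuine improvement: the paper stops after the inequality chain and never verifies the stated equality condition, whereas your termwise analysis (equality forces $M_k=(2F)^{k/2}$ for all $k\ge 5$, then testing the even moment $k=6$ and invoking $\sum_i\rho_i=0$) actually proves that equality holds if and only if $G$ is empty.
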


\begin{proof} By the definition of the Sombor Estrada index of graphs, we have
\begin{eqnarray*}
EE(G) & \leq & n+\sum\limits_{t=1}^{n}\rho_t(G)+\sum\limits_{t=1}^{n}\frac{\rho_t^2(G)}{2!}+\sum\limits_{t=1}^{n}\frac{\rho_t^3(G)}{3!}
+\sum\limits_{t=1}^{n}\frac{\rho_t^4(G)}{4!}+\sum\limits_{t=1}^{n}\sum\limits_{k\geq 5}^{\infty}\frac{|\rho_t(G)|^k}{k!}\\
& = & n+F+\frac{tr(\mathcal{S}^3)}{6}+\frac{tr(\mathcal{S}^4)}{24}+\sum\limits_{t=1}^{n}\sum\limits_{k\geq 5}^{\infty}\frac{|\rho_t(G)|^k}{k!}\\
& = & n+F+\frac{tr(\mathcal{S}^3)}{6}+\frac{tr(\mathcal{S}^4)}{24}+\sum\limits_{k\geq 5}^{\infty}\frac{1}{k!}\sum\limits_{t=1}^{n}|\rho_t(G)|^k\\
\end{eqnarray*}
\begin{eqnarray*}
& = & n+F+\frac{tr(\mathcal{S}^3)}{6}+\frac{tr(\mathcal{S}^4)}{24}+\sum\limits_{k\geq 5}^{\infty}\frac{1}{k!}\sum\limits_{t=1}^{n}(\rho_t^2(G))^{\frac{k}{2}}\\
& \leq & n+F+\frac{tr(\mathcal{S}^3)}{6}+\frac{tr(\mathcal{S}^4)}{24}+\sum\limits_{k\geq 5}^{\infty}\frac{1}{k!}(\sum\limits_{t=1}^{n}\rho_t^2(G))^{\frac{k}{2}}\\
& = & n+F+\frac{tr(\mathcal{S}^3)}{6}+\frac{tr(\mathcal{S}^4)}{24}+\sum\limits_{k=0}^{\infty}\frac{(2F)^{\frac{k}{2}}}{k!}-1-\sqrt{2F}-F
-\frac{1}{3}F\sqrt{2F}-\frac{1}{6}F^2\\
& = & n-1+\frac{tr(\mathcal{S}^3)}{6}+\frac{tr(\mathcal{S}^4)}{24}+e^{\sqrt{2F}}-\sqrt{2F}
-\frac{1}{3}F\sqrt{2F}-\frac{1}{6}F^2.
\end{eqnarray*}
This completes the proof. $\Box$
\end{proof}

\begin{theorem}\label{th5,2} 
Let $G$ be a graph with $n$ vertices. Then
$$EE(G)\geq e^{\rho_1(G)}+n_0+(p-1)e^{\frac{\frac{\mathcal{E}(G)}{2}-\rho_1(G)}{p-1}}
+qe^{-\frac{\mathcal{E}(G)}{2q}}\eqno{(5.1)}$$
with equality if and only if $\rho_2(G)=\cdots=\rho_p(G)$ and $\rho_{n-q+1}(G)=\cdots=\rho_n(G)$, where $p$, $n_0$ and $q$ are the number of positive, zero and negative Sombor eigenvalues of $G$, respectively.
\end{theorem}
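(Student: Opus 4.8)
The plan is to partition the Sombor spectrum according to sign and apply the arithmetic--geometric mean inequality to the positive and negative parts separately. Arrange the eigenvalues as $\rho_1 \geq \cdots \geq \rho_p > 0 = \cdots = 0 > \rho_{n-q+1} \geq \cdots \geq \rho_n$, so that there are $p$ positive, $n_0$ zero, and $q$ negative eigenvalues with $p+n_0+q=n$. Splitting the Estrada index accordingly,
$$EE(G)=e^{\rho_1}+\sum\limits_{i=2}^{p}e^{\rho_i}+n_0+\sum\limits_{i=n-q+1}^{n}e^{\rho_i},$$
since each of the $n_0$ zero eigenvalues contributes $e^0=1$.

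The first step I would carry out is to record the identity linking the positive and negative parts of the spectrum to the energy. Because $\sum_{i=1}^{n}\rho_i=0$ by Lemma \ref{le2,9}(i), the sum of the positive eigenvalues equals the absolute value of the sum of the negative ones; combining this with $\mathcal{E}(G)=\sum_{i=1}^{n}|\rho_i|$ yields
$$\sum\limits_{i=1}^{p}\rho_i=\frac{\mathcal{E}(G)}{2},\qquad \sum\limits_{i=n-q+1}^{n}\rho_i=-\frac{\mathcal{E}(G)}{2}.$$
In particular $\sum_{i=2}^{p}\rho_i=\frac{\mathcal{E}(G)}{2}-\rho_1$.

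Next I would apply AM--GM to the two blocks of exponentials. For the $p-1$ positive eigenvalues $\rho_2,\ldots,\rho_p$,
$$\sum\limits_{i=2}^{p}e^{\rho_i}\geq (p-1)\left(\prod\limits_{i=2}^{p}e^{\rho_i}\right)^{1/(p-1)}=(p-1)\,e^{\frac{1}{p-1}\sum_{i=2}^{p}\rho_i}=(p-1)\,e^{\frac{\frac{\mathcal{E}(G)}{2}-\rho_1}{p-1}},$$
with equality if and only if $\rho_2=\cdots=\rho_p$; and likewise for the $q$ negative eigenvalues,
$$\sum\limits_{i=n-q+1}^{n}e^{\rho_i}\geq q\,e^{\frac{1}{q}\sum_{i=n-q+1}^{n}\rho_i}=q\,e^{-\frac{\mathcal{E}(G)}{2q}},$$
with equality if and only if $\rho_{n-q+1}=\cdots=\rho_n$. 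Substituting both bounds into the decomposition of $EE(G)$ gives (5.1), and the equality case is exactly the conjunction of the two AM--GM equality conditions.

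There is no serious obstacle here: the estimate reduces to AM--GM once the spectrum is split by sign. The only point requiring care is the bookkeeping that forces the sum of the positive eigenvalues to equal $\mathcal{E}(G)/2$ --- this rests jointly on the trace identity $\sum_i\rho_i=0$ and on the definition of energy --- together with the observation that the two equality conditions concern disjoint blocks of eigenvalues and hence combine into the stated characterization.
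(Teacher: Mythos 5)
Your proposal is correct and follows essentially the same route as the paper: split the spectrum by sign, use $\sum_i\rho_i=0$ together with the definition of $\mathcal{E}(G)$ to identify the sum of the positive (resp.\ negative) eigenvalues with $\pm\mathcal{E}(G)/2$, and apply the arithmetic--geometric mean inequality separately to the blocks $\rho_2,\ldots,\rho_p$ and $\rho_{n-q+1},\ldots,\rho_n$, with the equality case read off from the two AM--GM conditions. Your write-up is in fact slightly more careful than the paper's, since it states explicitly the energy identity that the paper uses implicitly.
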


\begin{proof}
Let $\rho_1(G)\geq \rho_2 (G)\geq \cdots \geq \rho_p(G)$ be the positive, and $\rho_{n-q+1}(G)\geq \rho_{n-q+2} (G)\geq \cdots \geq \rho_n(G)$ be the negative Sombor eigenvalues of $G$. By the arithmetic-geometric mean inequality, we have
$$\sum\limits_{i=2}^{p}e^{\rho_i(G)}\geq (p-1)e^{\frac{\rho_2(G)+\cdots+\rho_p(G)}{p-1}}
=(p-1)e^{\frac{\frac{\mathcal{E}(G)}{2}-\rho_1(G)}{p-1}}.\eqno{(5.2)}$$
Similarly,
$$\sum\limits_{i=n-q+1}^{n}e^{\rho_i(G)}\geq qe^{-\frac{\mathcal{E}(G)}{2q}}.\eqno{(5.3)}$$
For the zero eigenvalues, we have
$$\sum\limits_{p+1}^{n-q}e^{\rho_i(G)}=n_0.$$
Thus we have
$$EE(G)\geq e^{\rho_1(G)}+n_0+(p-1)e^{\frac{\frac{\mathcal{E}(G)}{2}-\rho_1(G)}{p-1}}
+qe^{-\frac{\mathcal{E}(G)}{2q}}.$$

The equality holds in $(5.1)$ if and only if equality holds in both $(5.2)$ and $(5.3)$ and these
happen if and only if $\rho_2(G)=\cdots=\rho_p(G)$ and $\rho_{n-q+1}(G)=\cdots=\rho_n(G)$. This completes the
proof. $\Box$
\end{proof}

\begin{theorem}\label{th5,3} 
Let $G$ be a bipartite graph with $n$ vertices. Then
$$EE(G)\geq n_0+2\cosh(\rho_1(G))+(r-2)\cosh\left(\frac{\mathcal{E}(G)-2\rho_1(G)}{r-2}\right)\eqno{(5.4)}$$
with equality if and only if $\rho_2(G)=\cdots=\rho_p(G)$, where $r$ is the rank of Sombor matrix.
\end{theorem}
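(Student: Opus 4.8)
The plan is to exploit the bipartiteness of $G$, which forces the Sombor spectrum to be symmetric about the origin, and then to invoke the convexity of the hyperbolic cosine. First I would record the structural fact that, after ordering the vertices by the two colour classes, the Sombor matrix takes the block form
$$\mathcal{S}(G)=\begin{pmatrix}0&B\\B^{T}&0\end{pmatrix};$$
conjugating by the diagonal $\pm 1$ matrix that encodes the bipartition sends $\mathcal{S}(G)$ to $-\mathcal{S}(G)$, so the two matrices are similar and the nonzero eigenvalues occur in pairs $\pm\rho_i(G)$. Consequently the number $p$ of positive eigenvalues equals the number of negative ones, the rank obeys $r=2p$ (equivalently $p-1=(r-2)/2$), the number of zero eigenvalues is $n_0=n-r$, and $\mathcal{E}(G)=\sum_{i=1}^{n}|\rho_i(G)|=2\sum_{i=1}^{p}\rho_i(G)$.

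Next I would pair each $\rho_i(G)$ with its negative to rewrite the Estrada index as
$$EE(G)=\sum_{i=1}^{n}e^{\rho_i(G)}=n_0+\sum_{i=1}^{p}\bigl(e^{\rho_i(G)}+e^{-\rho_i(G)}\bigr)=n_0+2\cosh(\rho_1(G))+\sum_{i=2}^{p}2\cosh(\rho_i(G)),$$
so the whole problem reduces to bounding the last sum from below. Since $\cosh$ is strictly convex, Jensen's inequality applied to the $p-1$ numbers $\rho_2(G),\ldots,\rho_p(G)$ gives
$$\sum_{i=2}^{p}2\cosh(\rho_i(G))\geq 2(p-1)\cosh\!\left(\frac{1}{p-1}\sum_{i=2}^{p}\rho_i(G)\right)=(r-2)\cosh\!\left(\frac{1}{p-1}\sum_{i=2}^{p}\rho_i(G)\right).$$

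Finally I would convert the averaged argument into the stated form via the energy identity: from $\mathcal{E}(G)=2\sum_{i=1}^{p}\rho_i(G)$ one gets $\sum_{i=2}^{p}\rho_i(G)=\tfrac12\mathcal{E}(G)-\rho_1(G)$, hence $\frac{1}{p-1}\sum_{i=2}^{p}\rho_i(G)=\frac{\mathcal{E}(G)-2\rho_1(G)}{2(p-1)}=\frac{\mathcal{E}(G)-2\rho_1(G)}{r-2}$, which is precisely the argument appearing in $(5.4)$; combining the last two displays proves the inequality. For equality, the single estimate used is Jensen's for the strictly convex $\cosh$, which is tight exactly when $\rho_2(G)=\cdots=\rho_p(G)$, matching the stated condition. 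I expect the only genuinely delicate point to be the careful bookkeeping that ties $r$, $p$, $n_0$ and $\mathcal{E}(G)$ together through the bipartite symmetry; the analytic heart of the proof is this one application of Jensen's inequality, paralleling the arithmetic--geometric mean step used in Theorem~\ref{th5,2}.
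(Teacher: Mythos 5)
Your proposal is correct and follows essentially the same route as the paper: both exploit the bipartite symmetry $\rho_i(G)=-\rho_{n-i+1}(G)$ to pair the spectrum into hyperbolic cosines, then average the terms $\rho_2(G),\ldots,\rho_p(G)$ by a convexity argument and use $r=2p$ to rewrite $p-1$ as $(r-2)/2$. The only cosmetic difference is that the paper applies the arithmetic--geometric mean inequality separately to $\sum_{i=2}^{p}e^{\rho_i(G)}$ and $\sum_{i=2}^{p}e^{-\rho_i(G)}$ (mirroring Theorem \ref{th5,2}) and then combines the results into $\cosh$, whereas you apply Jensen's inequality once directly to $\cosh$; these are the same estimate.
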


\begin{proof} Since $G$ is bipartite, we have that its Sombor eigenvalues are symmetric with respect to zero, i.e. $\rho_{i}(G)=-\rho_{n-i+1}(G)$ for $i=1, 2, \ldots, \lfloor\frac{n}{2}\rfloor$.
By a similar argument as the proof of Theorem \ref{th5,2}, we have
\begin{eqnarray*}
EE(G) & = & n_0+e^{\rho_1(G)}+e^{-\rho_1(G)}+\sum\limits_{i=2}^{p}e^{\rho_i(G)}
+\sum\limits_{i=2}^{p}e^{-\rho_i(G)}\\
& \geq & n_0+e^{\rho_1(G)}+e^{-\rho_1(G)}\\
& & +(p-1)\left(e^{\frac{\frac{\mathcal{E}(G)}{2}-\rho_1(G)}{p-1}}
+e^{-\frac{\frac{\mathcal{E}(G)}{2}-\rho_1(G)}{p-1}}\right)\\
& = & n_0+2\cosh(\rho_1(G))+(r-2)\cosh\left(\frac{\mathcal{E}(G)-2\rho_1(G)}{r-2}\right).
\end{eqnarray*}
Note that $r=2p$. Equality holds in $(5.4)$ if and only if $\rho_2(G)=\cdots=\rho_p(G)$. The proof is completed. $\Box$
\end{proof}

\begin{theorem}\label{th5,4} 
Let $G$ be a connected bipartite graph with $n\geq 4$ vertices. Then
$$EE(G)\leq n-2+2\cosh\sqrt{F}\eqno{(5.5)}$$
with equality if and only if $G\cong K_{s,\, t}$, $s+t=n$.
\end{theorem}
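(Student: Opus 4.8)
The plan is to exploit the bipartiteness of $G$ to pair the Sombor eigenvalues into hyperbolic cosines, then expand each $\cosh$ as a power series and control the resulting power sums $\sum_i\rho_i^{2k}$ by $F^k$. First I would set up the spectral bookkeeping. Since $G$ is connected and bipartite, its Sombor spectrum is symmetric about $0$; let $\rho_1\ge\cdots\ge\rho_p>0$ be the positive Sombor eigenvalues, so the negative ones are exactly $-\rho_1,\dots,-\rho_p$ and there are $n_0=n-2p$ zero eigenvalues. Pairing $e^{\rho_i}+e^{-\rho_i}=2\cosh\rho_i$ gives
$$EE(G)=n_0+\sum_{i=1}^{p}\bigl(e^{\rho_i}+e^{-\rho_i}\bigr)=n-2p+2\sum_{i=1}^{p}\cosh\rho_i.$$
Moreover, by Lemma~\ref{le2,9}(i) together with the symmetry of the spectrum, $\sum_{i=1}^{p}\rho_i^2=\tfrac12\sum_{i=1}^{n}\rho_i^2=F$.

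Next I would expand and estimate. Writing $\cosh\rho_i=\sum_{k\ge0}\rho_i^{2k}/(2k)!$ and interchanging the (absolutely convergent) sums,
$$\sum_{i=1}^{p}\cosh\rho_i=p+\sum_{k\ge1}\frac{1}{(2k)!}\sum_{i=1}^{p}\rho_i^{2k}.$$
Setting $a_i=\rho_i^2>0$, the elementary inequality $\sum_{i=1}^{p}a_i^{k}\le\bigl(\sum_{i=1}^{p}a_i\bigr)^{k}$, valid for integers $k\ge1$ since the multinomial expansion of the right side contains each $a_i^{k}$ with coefficient one and all remaining terms are nonnegative, yields $\sum_{i=1}^{p}\rho_i^{2k}\le F^{k}$. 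Substituting and recognising $\sum_{k\ge0}F^{k}/(2k)!=\cosh\sqrt F$ gives $\sum_{i=1}^{p}\cosh\rho_i\le p-1+\cosh\sqrt F$, whence $EE(G)\le n-2p+2(p-1+\cosh\sqrt F)=n-2+2\cosh\sqrt F$, which is the asserted bound (5.5).

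For the equality discussion I would observe that, since $EE(G)=n+2\sum_{k\ge1}\frac{1}{(2k)!}\sum_{i}\rho_i^{2k}$ and the coefficients $2/(2k)!$ are positive with the $k=1$ term already exact, equality in (5.5) forces $\sum_{i=1}^{p}\rho_i^{2k}=F^{k}$ for every $k\ge2$. For such $k$, $\sum a_i^{k}=(\sum a_i)^{k}$ holds iff at most one $a_i$ is nonzero; as every $a_i=\rho_i^2$ is strictly positive, this forces $p\le1$, and since $G$ is nonempty we have $\rho_1>0$, so $p=1$. Thus equality occurs exactly when $\rho_2=\cdots=\rho_{n-1}=0$ and $\rho_1=-\rho_n$, which is precisely the left-hand equality condition in Theorem~\ref{th4,2}; by that theorem $G\cong K_{s,t}$ with $s+t=n$. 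Conversely, for $K_{s,t}$ one has $\rho_1=-\rho_n=\sqrt F$ and all other Sombor eigenvalues zero, so $EE(K_{s,t})=(n-2)+2\cosh\sqrt F$, confirming equality.

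The analytic estimate is routine once the eigenvalues are paired, so I expect the only delicate point to be the equality case, namely that a single pair of nonzero Sombor eigenvalues forces the complete bipartite structure. Rather than reprove this from scratch, I would lean on Theorem~\ref{th4,2}, whose extremal analysis already matches exactly this spectral condition with $G\cong K_{s,t}$; the remaining care is simply to ensure that the power-sum inequality is strict unless $p=1$, which is where the hypotheses $n\ge4$ and connectedness (guaranteeing $\rho_1>0$ and $n_0>0$) are used.
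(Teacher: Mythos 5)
Your proof is correct and follows essentially the same route as the paper: pair the symmetric Sombor spectrum into hyperbolic cosines, bound the even power sums $\sum_{i=1}^{p}\rho_i^{2k}$ by $F^{k}$ via the multinomial inequality, and show that equality forces $p=1$, i.e.\ $\rho_1=-\rho_n=\sqrt{F}$ with all other Sombor eigenvalues zero; where the paper then invokes Lemma~\ref{le2,7} directly to get $diam(G)=2$ and hence $G\cong K_{s,\,t}$, you cite the equality case of Theorem~\ref{th4,2}, which is the same argument one level removed. The only blemish is your closing parenthetical claim that connectedness and $n\geq 4$ guarantee $n_0>0$ (false in general, e.g.\ the Sombor matrix of $P_4$ has nonzero determinant), but that claim is never actually used in your argument, so it is harmless.
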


\begin{proof} Since $G$ is bipartite, we have that its Sombor eigenvalues are symmetric with respect to zero. Let $p$ and $n_0$ be the number of positive and   zero Sombor eigenvalues of $G$, respectively. Then we have
\begin{eqnarray*}
EE(G) & = & n_0+\sum\limits_{i=1}^{p}\left(e^{\rho_i(G)}
+e^{-\rho_i(G)}\right)\\
& = & n_0+2p+2\sum\limits_{k=1}^{\infty}\frac{\sum\limits_{i=1}^{p}\rho_i^{2k}(G)}{(2k)!}\\
& \leq & n+2\sum\limits_{k=1}^{\infty}\frac{\left(\sum\limits_{i=1}^{p}\rho_i^{2}(G)\right)^k}{(2k)!}\\
\end{eqnarray*}
\begin{eqnarray*}
& = & n-2+2\sum\limits_{k=0}^{\infty}\frac{(\sqrt{F})^{2k}}{(2k)!}\\
& = & n-2+e^{\sqrt{F}}+e^{-\sqrt{F}}\\
& = & n-2+2\cosh\sqrt{F}.
\end{eqnarray*}
If equality holds in (5.5), then
$$\sum\limits_{i=1}^{p}\rho_i^{2k}(G)=\left(\sum\limits_{i=1}^{p}\rho_i^{2}(G)\right)^k$$
for $k\geq 1$. Since $G$ is a connected graph with $n\geq 4$ vertices, we have $\rho_1(G)>0$, that is $p\geq 1$.
For $k\geq 2$,
$$\sum\limits_{i=1}^{p}\rho_i^{2k}(G)=\left(\sum\limits_{i=1}^{p}\rho_i^{2}(G)\right)^k$$
implies that $p\leq 1$, as $\rho_i(G)$'s are positive eigenvalues.
Thus $p=1$, that is $\rho_1(G)=-\rho_n(G)=\sqrt{F}$, $\rho_2(G)=\cdots=\rho_{n-1}(G)=0$. By Lemma \ref{le2,7}, we have $diam(G)=2$. Thus $G$ is a complete bipartite graph $K_{s,\, t}$, $s+t=n$. Conversely, if $G\cong K_{s,\, t}$, then $\mathcal{S}(G)=\sqrt{s^2+t^2}A(G)$. Thus $\rho_1(G)=-\rho_n(G)=\sqrt{s^3t+st^3}=\sqrt{F}$, $\rho_2(G)=\cdots=\rho_{n-1}(G)=0$. It is easy to check that equality holds in (5.5). The proof is completed. $\Box$
\end{proof}

\begin{corollary}\label{cor5,1} 
Let $G$ be a complete bipartite graph with $n\geq 4$ vertices. Then
$$n-2+2\cosh\sqrt{(n-1)(n^2-2n+2)}\leq EE(G)\leq n-2+2\cosh\sqrt{\left\lceil\frac{n}{2}\right\rceil^3\left\lfloor\frac{n}{2}\right\rfloor
+\left\lfloor\frac{n}{2}\right\rfloor\left\lceil\frac{n}{2}\right\rceil^3}$$
The equality in the left hand side holds if and only if $G\cong K_{1,\, n-1}$, and the equality in the right hand side holds if and only if $G\cong K_{\lceil\frac{n}{2}\rceil,\, \lfloor\frac{n}{2}\rfloor}$.
\end{corollary}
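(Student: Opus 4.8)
The plan is to reduce the corollary to a one-variable monotonicity problem, exactly parallel to the proof of Corollary \ref{cor4,1}. From the computation at the end of the proof of Theorem \ref{th5,4}, for $G\cong K_{s,t}$ with $s+t=n$ one has $\mathcal{S}(G)=\sqrt{s^2+t^2}\,A(G)$, so its nonzero Sombor eigenvalues are $\rho_1=-\rho_n=\sqrt{s^3t+st^3}=\sqrt{F}$ and the remaining $n-2$ eigenvalues vanish. Hence
$$EE(K_{s,t})=(n-2)+e^{\sqrt{F}}+e^{-\sqrt{F}}=(n-2)+2\cosh\sqrt{F},\qquad F=s^3t+st^3.$$
Since $x\mapsto 2\cosh\sqrt{x}$ is strictly increasing on $[0,\infty)$, extremizing $EE$ over complete bipartite graphs on $n$ vertices is equivalent to extremizing $F=st(s^2+t^2)$ subject to $s+t=n$ and $1\le t\le s$.

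First I would record the two endpoint values: at $K_{1,n-1}$ we get $F=(n-1)(n^2-2n+2)$, and at the balanced graph $K_{\lceil n/2\rceil,\,\lfloor n/2\rfloor}$ we get precisely the quantity appearing under the right-hand radical. It then remains to show that, as the smaller part $t$ ranges over $\{1,2,\ldots,\lfloor n/2\rfloor\}$ with $s=n-t$, the value $F$ is strictly increasing in $t$; this identifies $t=1$ as the unique minimizer (giving $K_{1,n-1}$) and $t=\lfloor n/2\rfloor$ as the unique maximizer (giving $K_{\lceil n/2\rceil,\,\lfloor n/2\rfloor}$), and substituting the endpoint values of $F$ into $(n-2)+2\cosh\sqrt{F}$ yields both inequalities together with the stated equality cases.

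The cleanest way to establish the monotonicity is the substitution $u=st=t(n-t)$. Using $s^2+t^2=(s+t)^2-2st=n^2-2u$ gives $F=u(n^2-2u)=n^2u-2u^2$, a downward parabola in $u$ whose derivative $n^2-4u$ is nonnegative on $(0,n^2/4]$ and vanishes only at the vertex $u=n^2/4$. Since $u=t(n-t)$ is itself strictly increasing for $t\in[1,n/2]$ and takes values in $(0,n^2/4]$, the composite $F$ is strictly increasing in $t$ on the integer grid. Alternatively, as in Corollary \ref{cor4,1}, one may differentiate $f(x)=(n-x)x\bigl((n-x)^2+x^2\bigr)$ directly and verify $f'(x)>0$ on $[1,\lfloor n/2\rfloor)$. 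The argument is routine once the closed form $EE(K_{s,t})=(n-2)+2\cosh\sqrt{F}$ is in hand; the only point needing a little care is the behaviour at the right endpoint, where $dF/du$ vanishes when $n$ is even. This causes no trouble, however, because we compare values only on the discrete grid $t=1,\ldots,\lfloor n/2\rfloor$ and $F$ is strictly increasing on $(0,n^2/4]$ in the sense that $u_1<u_2\le n^2/4$ forces $F(u_1)<F(u_2)$, so both the strict inequalities and the uniqueness of the extremal graphs are preserved.
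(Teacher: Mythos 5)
Your proposal is correct and follows essentially the same route as the paper: both reduce to the closed form $EE(K_{s,t})=n-2+2\cosh\sqrt{(n-t)^3t+(n-t)t^3}$ obtained in the proof of Theorem \ref{th5,4} and then establish monotonicity in $t$ on $[1,\lfloor n/2\rfloor]$. The only difference is that the paper disposes of the monotonicity with the phrase ``by derivative,'' whereas you make it transparent via the substitution $u=st$, writing $F=n^2u-2u^2$ and handling the vanishing derivative at $u=n^2/4$ carefully --- a welcome filling-in of detail, but the same argument.
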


\begin{proof} By the proof of Theorem \ref{th5,4}, we have
$$EE(K_{s,\,t})= n-2+2\cosh\sqrt{s^3t+st^3}=n-2+2\cosh\sqrt{(n-t)^3t+(n-t)t^3}, \, 1\leq t\leq \left\lfloor\frac{n}{2}\right\rfloor.$$
Let $f(x)=n-2+2\cosh\sqrt{(n-x)^3x+(n-x)x^3}$. By derivative, we know that $f(x)$ is a strictly increasing function in the interval $[1, \lfloor\frac{n}{2}\rfloor]$. Thus
$$EE(K_{1,\, n-1})=f(1)\leq EE(K_{s,\,t})= f(t)\leq f(\lfloor\frac{n}{2}\rfloor)=EE(K_{\lceil\frac{n}{2}\rceil,\, \lfloor\frac{n}{2}\rfloor}).$$
This completes the proof. $\Box$
\end{proof}

\small {

}


\begin{thebibliography}{99}


\bibitem{CS} L. Collatz, U. Sinogowitz, Spektren endlicher Grafen, Abh. Math. Sem. Univ. Hamburg 21 (1957) 63-77.%

\bibitem{CGR} R. Cruz, I. Gutman, J. Rada, Sombor index of chemical graphs, Appl. Math. Comput. 399 (2021) 126018.%

\bibitem{DCC} K.Ch. Das, A.S. \c{C}evik, I.N. Cangul, Y. Shang, On Sombor Index, Symmetry,  13, (2021) 140.%

\bibitem{DTW} H. Deng, Z. Tang, R. Wu, Molecular trees with extremal values of Sombor indices, Int J Quantum Chem. DOI: 10.1002/qua.26622.

\bibitem{FG} B. Furtula, I. Gutman, A forgotten topological index, J Math Chem , 53 (2015) 1184-1190.%

\bibitem{G} I. Gutman, The energy of a graph, Ber. Math.-Statist. Sekt. Forschungsz. Graz 103 (1978) 1-22.%

\bibitem{G1} I. Gutman, Topology and stability of conjugated hydrocarbons. The dependence of total $\pi$-electron energy on molecular topology, J. Serb. Chem. Soc. 70 (2005) 441-456.%

\bibitem{G2} I. Gutman, Comparative studies of graph energies, Bull. Acad. Serbe Sci. Arts (Cl. Sci. Math. Natur.) 144 (2012) 1-17.%

\bibitem{G3} I. Gutman, The energy of a graph: Old and new results, in: A. Betten, A. Kohnert, R. Laue, A. Wassermann (Eds.), Algebraic Combinatorics and Applications, Springer, Berlin, 2001, pp. 196-211.

\bibitem{G4} I. Gutman, Geometric approach to degree-based topological indices: Sombor indices, MATCH Commun. Math. Comput. Chem. 86 (2021) 11-16.%

\bibitem{G5} I. Gutman, Some basic properties of Sombor indices, Open J. Discret. Appl. Math.  4 (2021) 1-3.%

\bibitem{GG} C. Godsil, Algebraic Combinatorics, CRC Press, Boca Raton, 1993.

\bibitem{GP} I. Gutman, O.E. Polansky, Mathatical Concepts in Organic Chemistry, Springer, Berlin, 1986.%

\bibitem{GT} I. Gutman, N. Trinajsti\'{c}, Graph theory and molecular orbitals. Total $\pi$-electron energy of alternant hydrocarbons, Chem. Phys. Lett. 17 (1972) 535-538.%

\bibitem{H} W.H. Haemers, Interlacing eigenvalues and graphs, Linear Algebra Appl. 226-228 (1995) 593-616.%

\bibitem{H1} Y. Hong, A bound on the spectral radius of graphs, Linear Algebra Appl. 108 (1988) 135-140.%

\bibitem{LW} X. Li, Z. Wang, Trees with extremal spectral radius of weighted adjacency matrices among trees weighted by degree-based indices, Linear Algebra Appl. https://doi.org/10.1016/j.laa.2021.02.023.

\bibitem{K} V.R. Kulli, Sombor indices of certain graph operators, International Journal of Engineering Sciences \& Research Technology, 10 (2021) 127-134.%

\bibitem{K1} H. Kober, On the arithmetic and geometric means and on H\"{o}lders inequality, Proc. Amer. Math. Soc. 9 (1958) 452-459.%

\bibitem{KG} V.R. Kulli, I. Gutman, Computation of Sombor Indices of Certain Networks, International Journal of Applied Chemistry, 8 (2021) 1-5.%

\bibitem{LS} R. Liu, W.C. Shiu, General Randi\'{c} matrix and general Randi\'{c} incidence matrix, Discrete Appl. Math. 186 (2015) 168-175.

\bibitem{LSG} X. Li, Y. Shi, I. Gutman, Graph Energy, Springer, New York, 2012.%

\bibitem{MMM} I. Milovanovi\'{c}, E. Milovanovi\'{c}, M. Mateji\'{c}, On Some mathematical properties of Sombor indeces, Bull. Int. Math. Virtual Inst. 11 (2021) 341-353.%

\bibitem{N} V. Nikiforov, Some new results in extremal graph theory. Surveys in combinatorics 2011, 141-181, London Math. Soc. Lecture Note Ser., 392, Cambridge Univ. Press, Cambridge, 2011.%

\bibitem{R} I. Red\v{z}epovi\'{c}, Chemical applicability of Sombor indices, J. Serb. Chem. Soc. https://doi.org/10.2298/JSC201215006R.

\bibitem{RDA} T. R\'{e}ti, T. Do\v{s}li\'{c}, A. Ali, On the Sombor index of graphs, Contrib. Math. 3 (2021) 11-18.%

\bibitem{S} D. Stevanovi\'{c}, Spectral Radius of Graphs, Academic Press, Amsterdam, 2015.%

\bibitem{TT} M. Taita, J. Tobin, Three conjectures in extremal spectral graph theory, J. Comb. Theory, Ser. B 126 (2017) 137-161.%

\bibitem{VG} D. Vuki\v{c}evi\'{c}, M. Ga\v{s}perov, Bond additive modelling 1. Adriatic indices, Croat. Chem. Acta 83 (2010) 261-273.%

\bibitem{WMLF} Z. Wang, Y. Mao, Y. Li, B. Furtula, On relations between Sombor and other degree-based indices, J. Appl. Math. Comput. https://doi.org/10.1007/s12190-021-01516-x.

\bibitem{XLYZ} B. Xu, S. Li, R. Yu, Q. Zhao, On the spectral radius and energy of the weighted adjacency matrix of a graph, Appl. Math. Comput. 340 (2019) 156-163.%

\bibitem{Z} B. Zhou, On the spectral radius of nonnegative matrices, Australas. J. Comb. 22 (2000) 301-306.%





\end{thebibliography}
\end{document}